\newtheorem{thm}{Theorem}[section] 
\newtheorem{lemma}[thm]{Lemma} 
\newtheorem{prop}[thm]{Proposition} 
\theoremstyle{definition}
\newenvironment{remark}
  {\pushQED{\qed}\remarkx}
  {\popQED\endremarkx}
\newtheorem{exmp}[thm]{Example} 
\def\section{\@startsection{section}{1}%
 \z@{.7\linespacing\@plus\linespacing}{.5\linespacing}%
 {\normalfont\bfseries\scshape\centering}}
\def\subsection{\@startsection{subsection}{2}%
  \z@{.5\linespacing\@plus\linespacing}{.5\linespacing}%
  {\normalfont\bfseries\scshape}}
\def\subsubsection{\@startsection{subsubsection}{3}%
 \z@{.5\linespacing\@plus\linespacing}{-.5em}
 {\normalfont\bfseries}}
\DeclareMathOperator{\SW}{SW}
\DeclareMathOperator{\tl}{tl}
\DeclareMathOperator{\leg}{leg}
\DeclareMathOperator{\id}{id}
\newcounter{nalg} 
\begin{document}

\title[]{Asymptotics of 3-Stack-Sortable Permutations}
\subjclass[2010]{}

\author[]{Colin Defant}
\address[]{Department of Mathematics, Princeton University, Princeton, NJ 08544}
\email{cdefant@princeton.edu}
\author[]{Andrew Elvey Price}
\address[]{CNRS, Institut Denis Poisson, Universit\'e de Tours, France\\
}
\email{andrewelveyprice@gmail.com}
\author[]{Anthony J. Guttmann}
\address[]{School of Mathematics and Statistics, The University of Melbourne, Victoria 3010, Australia}
\email{guttmann@unimelb.edu.au}
\thanks{The first author is supported by an NSF Graduate Research Fellowship (grant DGE--1656466) and a Fannie and John Hertz Foundation Fellowship. The second author was supported by the European Research Council (ERC) in the European Union’s
Horizon 2020 research and innovation programme, under the Grant Agreement No. 759702.}
\maketitle

\begin{abstract}
We derive a simple functional equation with two catalytic variables characterising the generating function of 3-stack-sortable permutations. Using this functional equation, we extend the 174-term series to 1000 terms. From this series, we conjecture that the generating function behaves as 
$$W(t) \sim C_0(1-\mu_3 t)^\alpha \cdot \log^\beta(1-\mu_3 t), $$
so that $$[t^n]W(t)=w_n \sim \frac{c_0\mu_3^n}{  n^{(\alpha+1)}\cdot \log^\lambda{n}} ,$$
where $\mu_3 = 9.69963634535(30),$ $\alpha = 2.0 \pm 0.25.$ If $\alpha = 2$ exactly, then $\lambda = -\beta+1$, and we estimate $\beta \approx -2,$ but with a wide uncertainty of $\pm 1.$  If $\alpha$ is not an integer, then $\lambda=-\beta$, but we cannot give a useful estimate of $\beta$. The growth constant estimate (just) contradicts a conjecture of the first author that $$9.702 < \mu_3 \le 9.704.$$ We also prove a new rigorous lower bound of $\mu_3\geq 9.4854$, allowing us to disprove a conjecture of B\'ona. 

We then further extend the series using differential-approximants to obtain approximate coefficients $O(t^{2000}),$ expected to be accurate to $20$ significant digits, and use the approximate coefficients to provide additional evidence supporting the results obtained from the exact coefficients.
\end{abstract}
\makeatletter
\@setabstract
\makeatother

\section{Introduction}
A permutation is said to be {\em stack-sortable} if, when it is passed through a stack, the result is the increasing permutation. We describe the stack-sorting map below. Knuth \cite{K68} showed that a permutation is stack-sortable if and only if it avoids the pattern 231; such permutations are counted by the Catalan numbers $C_n = \binom {2n}{n}/(n+1) \sim 4^n/\sqrt{n^3\pi}$. 

A permutation is called {\em $k$-stack-sortable} if iteratively applying the stack-sorting map to it $k$ times results in the increasing permutation. Let $\mathcal W_k(n)$ be the set of $k$-stack-sortable permutations in $S_n$. West \cite{W90} conjectured, and Zeilberger \cite{Z92} subsequently proved, that $$|\mathcal W_2(n)| = \frac{2}{(n+1)(2n+1)}\binom{3n}{n} \sim \left (\frac{27}{4} \right )^n\frac{1}{2} \sqrt{\frac{3}{\pi n^5}}.$$ There is no known formula for $|\mathcal W_k(n)|$ when $k\geq 3$ is fixed. We are primarily interested in the numbers $|\mathcal W_3(n)|$, which we will denote by $w_n$.

Recently, the first author \cite{D20} developed a polynomial-time algorithm to generate the numbers $w_n$, thus providing a dramatic extension of the pre-existing 13-term sequence; the sequence is given in the OEIS as A134664 \cite{OEIS}. We start by reinterpreting these arguments to derive a functional equation that characterises this generating function. Using this functional equation, we then write an efficient algorithm with which we compute the numbers $w_{n}$ for $n\leq 1000$. This type of algorithm could apply to a wide variety of combinatorial functional equations, yet to our knowledge it is new to enumerative combinatorics. 

In the next section, we describe the derivation of the functional equation. We then describe the algorithm in Section~\ref{Sec:Series}. The remaining sections are devoted to a careful analysis of the 1000-term series that we use to conjecture the asymptotic behaviour. We first use standard methods of series analysis, looking at the behaviour of the ratios of successive coefficients and then analysing the series by the method of differential-approximants \cite{G89}. We then use the differential-approximants to (approximately) extend the series from 1000 terms to 2000 terms, using a method of series extension developed by the third author \cite{G16}, where the newly-obtained coefficients are expected to be accurate to at least 20 significant digits. This is more than sufficient for ratio-based analysis methods, so we then re-analyse the extended series.

\subsection{Background and notation}
Throughout this article, a \emph{permutation}
is an ordering of a finite set of positive integers. We let $S_n$ denote the set of permutations of the set $[n]:=\{1,\ldots,n\}$. Let $\id_n=123\cdots n$ be the identity permutation in $S_n$. The \emph{standardisation} of a permutation $\pi=\pi_1\cdots\pi_n$ is the permutation in $S_n$ obtained by replacing the $i$th-smallest entry in $\pi$ by $i$ for all $i$. For example, the standardisation of $5971$ is $2431$. We say two permutations have the \emph{same relative order} if their standardisations are equal. We say a permutation $\pi$ \emph{contains} a permutation $\tau$ as a pattern if there is a (not necessarily consecutive) subsequence of $\pi$ that has the same relative order as $\tau$; otherwise, we say $\pi$ \emph{avoids} $\tau$. A \emph{descent} of $\pi$ is an index $i\in[n-1]$ such that $\pi_i>\pi_{i+1}$. A \emph{peak} of $\pi$ is an index $i\in\{2,\ldots,n-1\}$ such that $\pi_{i-1}<\pi_i>\pi_{i+1}$. 

The stack-sorting map is a function $s$ that sends permutations to permutations; there is a simple recursive definition of this map. First, $s$ sends the empty permutation to itself. Now suppose $\pi$ is a nonempty permutation with largest entry $m$, and write $\pi=LmR$. Then $s(\pi)=s(L)s(R)m$. For example, \[s(326451)=s(32)\,s(451)\,6=2\,s(3)\,s(4)\,s(1)\,56=234156.\] A simple consequence of this definition, which we will use tacitly in what follows, is that $|s^{-1}(\pi)|=|s^{-1}(\pi')|$ whenever $\pi$ and $\pi'$ have the same relative order. 

We say a permutation $\pi$ is \emph{$k$-stack-sortable} if $s^k(\pi)$ is increasing, where $s^k$ denotes the $k$-fold iterate of $s$. Let $\mathcal W_k(n)$ denote the set of $k$-stack-sortable permutations in $S_n$. Let $\mathcal W_k=\bigcup_{n\geq 0}\mathcal W_k(n)$.

The following theorem is essentially due to Knuth; it is the theorem that initiated the study of permutation patterns. 

\begin{thm}[\!\!\cite{K68}]\label{Thm:Knuth}
A permutation is $1$-stack-sortable if and only if it avoids the pattern $231$. The number of $1$-stack-sortable permutations in $S_n$ is the Catalan number $C_n=\frac{1}{n+1}{2n\choose n}$. 
\end{thm}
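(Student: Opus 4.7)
The plan is to establish both claims simultaneously by induction on $n$, exploiting the recursive definition $s(LmR) = s(L)\,s(R)\,m$ where $m$ is the maximum entry of $\pi$. The base case $n=0$ is trivial, so fix $\pi\in S_n$ with $n\geq 1$ and write $\pi = LmR$.

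First I would derive a structural characterisation of 1-stack-sortability. Since $s(\pi) = s(L)s(R)m$, the condition $s(\pi)=\id_n$ forces $s(L)s(R) = 1\,2\cdots(n-1)$. Because $s$ preserves the multiset of entries, the values appearing in $L$ must form a prefix of $\{1,\ldots,n-1\}$, say $\{1,\ldots,|L|\}$, and then the standardisations of $L$ and $R$ must both be 1-stack-sortable. Next I would derive the analogous structural characterisation of 231-avoidance: $\pi$ avoids $231$ if and only if every entry of $L$ is smaller than every entry of $R$ and both $L,R$ avoid $231$. The nontrivial direction here is that if some $b\in L$ exceeds some $a\in R$, then the triple $(b,m,a)$ at its three positions is a $231$ pattern; conversely, assuming the $L<R$ condition, a case analysis over how the three positions of a hypothetical $231$ pattern can be distributed among $L$, $\{m\}$, $R$ uses the maximality of $m$ to rule out every crossing configuration.

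These two characterisations coincide, so by the induction hypothesis $\pi$ is 1-stack-sortable if and only if it avoids $231$. For the enumeration, setting $c_n := |\mathcal W_1(n)|$ and conditioning on $k = |L|$, the decomposition yields
\[
c_n \;=\; \sum_{k=0}^{n-1} c_k\, c_{n-1-k}, \qquad c_0 = 1,
\]
which is the Catalan recurrence, giving $c_n = \frac{1}{n+1}\binom{2n}{n}$. The main obstacle in this outline is the 231-avoidance case analysis in the second step: it is essential that $m$ is the \emph{global} maximum, since otherwise $231$ patterns could be assembled from entries of $L$ and $R$ without using $m$, and the clean split into the two subproblems would fail. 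Everything else reduces to the recursive definition of $s$ and a standard Catalan recurrence argument.
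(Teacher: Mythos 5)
Your argument is correct. Note that the paper does not prove this statement at all --- it is quoted as a classical result of Knuth with a citation to \cite{K68} --- so there is no in-paper proof to compare against; what you have written is the standard self-contained argument, proceeding by induction on $n$ through the decomposition $\pi=LmR$, matching the recursive characterisation of $s(\pi)=\id_n$ (entries of $L$ form $\{1,\dots,|L|\}$ and both blocks sort) with the recursive characterisation of $231$-avoidance, and then reading off the Catalan recurrence $c_n=\sum_{k=0}^{n-1}c_kc_{n-1-k}$. One small point of emphasis: in the converse direction of the $231$-avoidance split, most crossing configurations (an entry of $L$ playing the ``2'' and an entry of $R$ playing the ``1'') are excluded by the hypothesis that every entry of $L$ is smaller than every entry of $R$, while the maximality of $m$ is only needed to rule out $m$ serving as the ``1'' or ``2'' of a pattern; your sketch attributes slightly too much to maximality, but the case analysis goes through exactly as you indicate.
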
 

The preceding theorem tells us that if $X$ is any finite set of $n$ positive integers and $\pi$ is the permutation obtained by listing the elements of $X$ in increasing order, then $|s^{-1}(\pi)|=C_n$. 

In his dissertation, West \cite{W90} characterised the $2$-stack-sortable permutations as follows. 

\begin{thm}[\!\!\cite{W90}]\label{Thm:West}
A permutation is $2$-stack-sortable if and only if it avoids the pattern $2341$ and also avoids any $3241$ pattern that is not part of a $35241$ pattern. 
\end{thm}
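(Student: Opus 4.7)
The plan is to reduce the claim to a statement about the stack-sorting map and then argue by induction on its recursive structure. By Theorem~\ref{Thm:Knuth}, $\pi$ is $2$-stack-sortable if and only if $s(\pi)$ avoids $231$, so it suffices to show that $s(\pi)$ contains a $231$ pattern if and only if $\pi$ either contains a $2341$ pattern or contains a $3241$ pattern that is not part of any $35241$ pattern in $\pi$. Every step of the argument will exploit the decomposition $s(\pi)=s(L)s(R)m$, where $\pi=LmR$ and $m$ is the maximum entry of $\pi$.

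For the forward direction, the small computations $s(2341)=s(23)\,s(1)\,4 = 2314$ and $s(3241)=s(32)\,s(1)\,4 = 2314$ exhibit explicit $231$ patterns. To promote these to a general statement, given a specified occurrence of $2341$ or of an unextendable $3241$ inside a larger permutation $\pi$, I would track the distinguished entries of the pattern through the recursion for $s$. For a $3241$ occurrence the bookkeeping goes through provided no entry of $\pi$ strictly between the ``$3$'' and the ``$2$'' of the pattern exceeds the ``$4$'', i.e.\ provided the occurrence is not part of a $35241$; the illustrative computation $s(35241)=3\cdot 214\cdot 5 = 32145$, which avoids $231$, shows precisely why this clause is needed.

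For the reverse direction I would induct on $|\pi|$. Suppose $s(\pi)$ contains a $231$ pattern. Because $m$ is the final and largest entry of $s(\pi)$, it cannot play any role in a $231$, so the pattern lies inside $s(L)s(R)$. I would case-split according to how the three entries of the $231$ are distributed between $s(L)$ and $s(R)$. The cases in which all three entries lie in a single block close immediately via the inductive hypothesis applied to $L$ or to $R$, followed by lifting the extracted forbidden pattern back to $\pi$ by re-attaching the remaining block and $m$. The ``split'' cases are handled by identifying, for each of the three entries of the $231$, a specific entry of $\pi$ that maps to it under $s$ (using that $s(L)$ ends in $\max L$ and analogously for $s(R)$), and then combining these with $m$ and possibly one further entry of $\pi$ to exhibit a $2341$ pattern or a $3241$ pattern in $\pi$.

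The main obstacle is showing, in those reverse-direction sub-cases that produce a $3241$ in $\pi$, that this $3241$ admits no extension to a $35241$. The argument is contrapositive: if an entry $d$ larger than the ``$4$'' of the pattern sat between the ``$3$'' and the ``$2$'' in $\pi$, then under the recursion for $s$ the entry $d$ would be placed in $s(\pi)$ \emph{after} the would-be ``$2$'' of the $231$, destroying the pattern we started with and yielding a contradiction. Carrying this obstruction argument through each split configuration, and correctly handling the interaction between the extending entry $d$ and the current recursive maximum $m$, is where the bulk of the delicate bookkeeping is required; this is the step I expect to be the hardest to make watertight.
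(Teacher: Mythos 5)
The paper does not actually prove this theorem; it is quoted from West's dissertation \cite{W90} and used as a black box, so there is no in-paper argument to compare yours against. Judged on its own terms, your outline follows the natural route (reduce via Theorem~\ref{Thm:Knuth} to ``$s(\pi)$ contains $231$ iff $\pi$ contains $2341$ or a $3241$ in no $35241$,'' then induct on the decomposition $s(LmR)=s(L)s(R)m$), your base computations $s(2341)=2314$, $s(3241)=2314$, $s(35241)=32145$ are correct, and the observations that $m$ cannot participate in a $231$ of $s(\pi)$ and that the all-in-$s(L)$ / all-in-$s(R)$ cases close by induction are sound. But what you have written is a plan, not a proof: every step that carries the actual content is deferred.

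Concretely, three gaps remain. First, the forward direction is only verified on the four- and five-letter patterns themselves; for an occurrence embedded in a larger $\pi$ the map $s$ can permute the pattern's entries relative to one another depending on the surrounding entries, so ``tracking the distinguished entries through the recursion'' is itself an induction with a case analysis (where does $m$ sit relative to the occurrence, how is the occurrence split between $L$ and $R$) that you have not performed. Second, in the reverse direction the two split cases (the ``$2$'' in $s(L)$ with ``$31$'' in $s(R)$, and ``$23$'' in $s(L)$ with ``$1$'' in $s(R)$) require you to replace the pattern entries by other entries of $\pi$ whose relative positions in $\pi$ you can actually control (e.g.\ using that $s(L)$ ends in $\max L$); this substitution is named but not carried out, and it is exactly where the $3241$-versus-$2341$ dichotomy arises. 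Third, and most seriously, the stated mechanism for the non-extendability step does not give a contradiction as written: if an extending entry $d$ merely ``lands after the would-be $2$'' in $s(\pi)$, the original $231$ occurrence is untouched, since inserting an extra entry after a pattern occurrence never destroys it. The true obstruction is that a large entry $d$ sitting between the ``$3$'' and the ``$2$'' changes the recursive decomposition so that the ``$3$'' is emitted \emph{before} the ``$2$'' (as one sees in $s(35241)=32145$ versus $s(3241)=2314$), i.e.\ $d$ reorders the pattern entries rather than being misplaced itself. Until that reordering argument is made precise in each split configuration, the proof is not complete.
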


As an example of West's characterisation, consider the permutation $\pi=416352$. Note that $\pi$ avoids $2341$. On the other hand, $\pi$ does not avoid $3241$ because it contains the subsequence $4352$. However, this subsequence is part of the occurrence $46352$ of the pattern $35241$, so it does not prevent $416352$ from being $2$-stack-sortable. There are no other occurrences of $3241$ in $\pi$, so $\pi$ is $2$-stack-sortable. 

In \cite{Ulfarsson}, \'Ulfarsson gave an analogous characterisation of $3$-stack-sortable permutations. However, his characterisation is too unwieldy to be useful for enumerative purposes. In order to obtain a functional equation for the generating function of $3$-stack-sortable permutations, we will instead make use of the Decomposition Lemma from \cite{D20}.

\section{The Functional Equation}\label{functional_equation}
In this section we derive a functional equation characterising the generating function of 3-stack-sortable permutations, as summarised by the following theorem. Recall that we let $w_n=|\mathcal W_3(n)|$ denote the number of $3$-stack-sortable permutations in $S_n$.
\begin{thm}\label{thm:Characterisation_functional_equation}
There exists a unique series $Q(x,a)\equiv Q(t,x,a)$ belonging to $\mathbb{Z}[a,x][[t]]$ that satisfies the equation
\begin{equation*}Q(x,a)=t(x+1)^2(1+a)^2+t(1+x)\frac{Q(x,a)-Q(x,0)}{a}\left((1+a)^2+a\frac{Q(x,a)-Q(0,a)}{x}\right)\end{equation*} \[+t(1+x)aQ(x,a).\]
The generating function 
\[W(t)\equiv\sum_{n=0}^{\infty}w_{n}t^{n}\] counting 3-stack-sortable permutations is given by $W(t)=Q(t,0,0)$.
\end{thm}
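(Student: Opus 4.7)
The approach splits naturally into a uniqueness argument and an existence/identification argument.

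For uniqueness, I would expand $Q(x,a) = \sum_{n \geq 0} Q_n(x,a)\, t^n$ with $Q_n \in \mathbb{Z}[x,a]$. Every term on the right-hand side of the functional equation carries a leading factor of $t$, and the divided differences $\frac{Q(x,a)-Q(x,0)}{a}$ and $\frac{Q(x,a)-Q(0,a)}{x}$ are legitimate elements of $\mathbb{Z}[x,a][[t]]$ because the numerators vanish at $a=0$ and $x=0$ respectively, hence are divisible by $a$ and $x$ as polynomials at each order in $t$. Extracting $[t^n]$ from both sides then yields an explicit formula for $Q_n$ in terms of $Q_0,\dots,Q_{n-1}$, so $Q$ is determined inductively by the forced initial condition $Q_0 = 0$. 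This gives uniqueness.

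For existence together with the identity $W(t) = Q(t,0,0)$, the plan is to construct $Q(t,x,a)$ as a combinatorial generating function and then verify the functional equation by a bijective decomposition. Concretely, I would define $Q$ as a weighted enumeration over pairs $(\pi,\sigma)$ with $\pi\in\mathcal{W}_2$ and $\sigma\in s^{-1}(\pi)$, so that each pair contributes $t^{|\pi|}x^{\alpha(\pi)} a^{\beta(\pi)}$, for two statistics $\alpha,\beta$ of $\pi$ chosen so that setting $x=a=0$ isolates exactly the count $\sum_{\pi\in\mathcal{W}_2\cap S_n}|s^{-1}(\pi)| = w_n$. The Decomposition Lemma of \cite{D20} provides a Catalan-product formula for $|s^{-1}(\pi)|$ in terms of certain structural features of $\pi$ (essentially the "hills" between consecutive left-to-right maxima), and West's characterisation (Theorem~\ref{Thm:West}) restricts which $\pi$ are being summed over. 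The functional equation would then be obtained by a case analysis on the last step of a recursive construction of $\pi$: the base term $t(x+1)^2(1+a)^2$ handles the single-element case; the bilinear middle term captures the insertion of a new element that interacts with both statistics, with the divided differences encoding the "reset" that occurs when such an insertion severs the window that $a$ or $x$ was tracking; and the final term $t(1+x)a Q(x,a)$ handles an insertion that only affects the $a$-statistic.

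The main obstacle will be pinning down the statistics $\alpha,\beta$ and proving that the recursive construction of permutations in $\mathcal{W}_2$, combined with the Catalan factors from the Decomposition Lemma, reproduces the three terms of the stated equation on the nose. West's characterisation is delicate because $3241$ patterns are permitted only inside $35241$ patterns, which forces a non-trivial "look-back" window when building a 2-stack-sortable permutation one entry at a time — this is precisely the role played by the two catalytic variables, and formalising this bookkeeping (in a way that also correctly tracks the Catalan weight contributed by each hill of $\pi$ as seen by its preimages $\sigma$) is the technical heart of the argument. Once such a $Q$ has been exhibited and shown to satisfy the equation, the specialisation $Q(t,0,0)=W(t)$ is immediate from the construction, and uniqueness from the first part pins $Q$ down completely.
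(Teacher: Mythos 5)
Your uniqueness argument is essentially the paper's: expand $Q=\sum_{n\ge 0}Q_n(x,a)t^n$, observe that the divided differences $\frac{Q(x,a)-Q(x,0)}{a}$ and $\frac{Q(x,a)-Q(0,a)}{x}$ are genuine polynomials at each order, and extract $[t^n]$ to obtain a recurrence (the paper's \eqref{eq:recursiveQ}) forcing $Q_0=0$, $Q_1=(x+1)^2(1+a)^2$, and determining each $Q_n$ from $Q_0,\dots,Q_{n-1}$. That part is sound and matches the paper.

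The existence half has a genuine gap. You propose to realise $Q$ directly as $\sum_{(\pi,\sigma)}t^{|\pi|}x^{\alpha(\pi)}a^{\beta(\pi)}$ for two statistics chosen so that setting $x=a=0$ isolates the count $w_n$, and then to verify the stated equation by a one-step combinatorial decomposition. This is not how the theorem is proved, and the clean equation for $Q$ is very unlikely to come out of such a construction ``on the nose.'' What the paper actually does is prove a functional equation (Theorem~\ref{Thm26}) for the series $J(t,u,v)=\sum_{\pi\in\mathcal W_2}|s^{-1}(\pi)|\,t^{|\pi|}u^{\leg(\pi)-1}v^{\tl(\pi)}$, whose catalytic statistics are the number of legal spaces and the tail length; this is where the Decomposition Lemma (applied at the tail-bound descent $c(\pi)$, splitting $\pi$ into its unsheltered and sheltered subpermutations, not into ``hills between left-to-right maxima'') and West's characterisation do their work, and the resulting equation is considerably messier than the one in the statement. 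The equation for $Q$ is then obtained by a purely algebraic chain of substitutions: $J_1(t,u,w)=J(t,u,w/(tu))$, then a divided difference $J_2=u\frac{wJ_1(t,u,w)-tuJ_1(t,u,tu)}{w-tu}-u(C(w)-1)$, then $Q(t,x,a)=J_2(t,x+1,a/(1+a)^2)$, the last substitution chosen precisely so that $C(w)$ collapses to $1+a$. Your proposal omits this transformation entirely, and it also misreads how the specialisation works: $x=0$ corresponds to $u=1$ and $a=0$ to $w=0$, so $Q(t,0,0)=J_2(t,1,0)=J(t,1,1)$ \emph{sums over all values} of both statistics rather than isolating the permutations with statistic value zero; the factors $(x+1)$ and $(1+a)^2$ in the equation are footprints of these substitutions, not of statistics read off from $\pi$. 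The ``technical heart'' you defer is therefore not delicate bookkeeping around West's pattern condition; it is the missing intermediate series $J(t,u,v)$, its functional equation, and the transformation carrying it to $Q$.
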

We have been unable to solve this functional equation exactly. However, we will use it in subsequent sections to compute $1000$ terms of the series $W(t)$, which we will then analyse empirically.

In order to derive the functional equation, we will represent a permutation $\pi=\pi_1\cdots\pi_n$ via its \emph{plot}, which is the diagram showing the points $(i,\pi_i)$ for all $i\in[n]$. A \emph{hook} of $\pi$ is a sideways L shape that connects two points $(i,\pi_i)$ and $(j,\pi_j)$ in the plot of $\pi$ such that $i<j$ and $\pi_i<\pi_j$. The point $(i,\pi_i)$ is the \emph{southwest endpoint} of the hook, and $(j,\pi_j)$ is the \emph{northeast endpoint} of the hook. Let $\SW_i(\pi)$ be the set of hooks of $\pi$ with southwest endpoint $(i,\pi_i)$. For example, if $\pi=426315789$, then the hook shown in Figure~\ref{Fig4} is in $\SW_3(\pi)$ because its southwest endpoint is $(3,6)$; its northeast endpoint is $(8,8)$. 

\begin{figure}[h]
  \begin{center}{\includegraphics[width=0.24\textwidth]{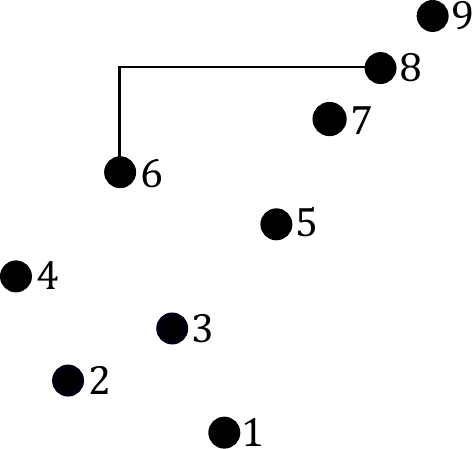}}
  \end{center}
  \caption{The plot of $426315789$ along with a single hook.}\label{Fig4}
\end{figure}

The \emph{tail length} of a permutation $\pi=\pi_1\cdots\pi_n\in S_n$, denoted $\tl(\pi)$, is the largest integer $\ell\in\{0,\ldots,n\}$`such that $\pi_i=i$ for all $i\in\{n-\ell+1,\ldots,n\}$. For example, $\tl(23145) = 2$, $\tl(23154) = 0$, and $\tl(12345) = 5$. The \emph{tail} of $\pi$ is the sequence of points $(n-\tl(\pi)+1,n-\tl(\pi)+1),\ldots,(n,n)$ in the plot of $\pi$. For example, the tail of $426315789$ is $(7,7),(8,8),(9,9)$. We say a descent $d$ of $\pi$ is \emph{tail-bound} if every hook in $\SW_d(\pi)$ has its northeast endpoint in the tail of $\pi$. The descents of $426315789$ are $1$, $3$, and $4$, but the only tail-bound descent is $3$. For $\pi\in S_n\setminus\{\id_n\}$, let $c(\pi)$ denote the index such that $\pi_{c(\pi)}=n-\tl(\pi)$; note that $c(\pi)$ is a tail-bound descent of $\pi$. 

We now describe a way to decompose permutations, which will lead to a functional equation for the generating function of 3-stack-sortable permutations. Let $H$ be a hook of $\pi$ with southwest endpoint $(i,\pi_i)$ and northeast endpoint $(j,\pi_j)$. Let $\pi_U^H=\pi_1\cdots\pi_i\pi_{j+1}\cdots\pi_n$ and 
$\pi_S^H=\pi_{i+1}\cdots\pi_{j-1}$. The permutations $\pi_U^H$ and $\pi_S^H$ are called the \emph{$H$-unsheltered subpermutation of $\pi$} and the \emph{$H$-sheltered subpermutation of $\pi$} respectively. For instance, if $\pi=426315789$ and $H$ is the hook shown in Figure~\ref{Fig4}, then $\pi_U^H=4269$ and $\pi_S^H=3157$. We will deal exclusively with situations in which $i$ is a tail-bound descent of $\pi$, in which case the plot of $\pi_S^H$ lies entirely beneath the hook $H$ (it is ``sheltered" by $H$). 

\begin{lemma}[Decomposition Lemma \cite{D20}]
If $d$ is a tail-bound descent of a permutation $\pi$, then \[|s^{-1}(\pi)|=\sum_{H\in\SW_d(\pi)}|s^{-1}(\pi_U^H)|\cdot|s^{-1}(\pi_S^H)|.\]
\end{lemma}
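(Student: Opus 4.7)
My approach is to construct an explicit bijection
\[ \Phi \colon s^{-1}(\pi) \longrightarrow \bigsqcup_{H \in \SW_d(\pi)} s^{-1}(\pi_U^H) \times s^{-1}(\pi_S^H). \]
I first recall that during any stack-sort computation the stack is strictly decreasing from bottom to top, so values are popped in increasing order. Given $\sigma \in s^{-1}(\pi)$, since $d$ is a descent we have $d<n$, so $\pi_d$ is not the final output and must be popped when some later input value exceeds it. Let $v$ denote the first value occurring in $\sigma$ after the position of $\pi_d$ with $v>\pi_d$, the \emph{popper} of $\pi_d$. Writing $v=\pi_j$, the pair $(d,\pi_d)\to(j,\pi_j)$ is a hook in $\SW_d(\pi)$, and the tail-bound hypothesis forces $\pi_j=j$. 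This hook $H$ is declared to be the first coordinate of $\Phi(\sigma)$.

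To produce $(\tau_U,\tau_S)$, let $w$ be the first value of $\sigma$ after $v$ exceeding $v$ (undefined if $v=n$). Let $\tau_S$ be the contiguous subsequence of $\sigma$ strictly between $v$ and $w$ (or the suffix of $\sigma$ after $v$ if $w$ is undefined), and let $\tau_U$ be $\sigma$ with $v$ and the entries of $\tau_S$ deleted. The engine of the subsequent verifications is the observation that in the sort of $\sigma$, once $\pi_d$ has been popped, no element on the stack lying \emph{below} the former position of $\pi_d$ can be popped before $v$ itself is popped. Indeed, any such element has value exceeding $\pi_d$, yet the next output after $\pi_d$ must be $\pi_{d+1}<\pi_d$ by the descent condition, a contradiction. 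Hence all outputs between $\pi_d$ and $v$ come from the entries of $\tau_S$, and while $v$ sits on the stack it acts as a floor beneath the processing of $\tau_S$; consequently $s(\tau_S)=\pi_{d+1}\cdots\pi_{j-1}=\pi_S^H$.

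For the identity $s(\tau_U)=\pi_U^H$, I run the sorts of $\sigma$ and $\tau_U$ in parallel. Their trajectories agree through the common prefix $A\cdot \pi_d\cdot B$, where $A$ is the prefix of $\sigma$ before $\pi_d$ and $B$ is the segment of $\sigma$ strictly between $\pi_d$ and $v$. At the next step, $\sigma$ presents $v$, popping the $B$-leftovers and $\pi_d$ (but no below-$\pi_d$ elements, by the key observation) and then pushing $v$; meanwhile $\tau_U$ presents $w$, popping the $B$-leftovers and $\pi_d$ and then precisely those below-$\pi_d$ elements with value less than $w$, before pushing $w$. In $\sigma$ those same below-$\pi_d$ elements are popped only later, when $w$ finally arrives after $\tau_S$, and by exactly the same cascade, because the stack state below $v$ is preserved throughout $\tau_S$'s processing. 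From the moment $w$ is pushed onward the stack states and remaining inputs coincide, so the outputs agree, giving $s(\tau_U)=\pi_1\cdots\pi_d\pi_{j+1}\cdots\pi_n=\pi_U^H$. The inverse $\Phi^{-1}$ takes $(H,\tau_U,\tau_S)$, locates $\pi_d$ in $\tau_U$, finds the first value of $\tau_U$ after $\pi_d$'s position that exceeds $v=j$ (or takes the end of $\tau_U$ if none exists), and inserts $v$ followed by $\tau_S$ immediately before that position; running the parallel-sort comparison in reverse shows $\Phi\circ\Phi^{-1}=\id$ and $\Phi^{-1}\circ\Phi=\id$.

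\textbf{Main obstacle.} The principal technical step is the parallel stack-sort comparison required for $s(\tau_U)=\pi_U^H$: excising $v$ and $\tau_S$ changes what input drives the pop cascade at $\pi_d$'s moment of exit. The argument rests on the descent-driven fact that no below-$\pi_d$ stack element is popped by $v$ in $\sigma$, and on the claim that the $w$-cascade in $\tau_U$ faithfully replicates the delayed below-$\pi_d$ cascade that $\sigma$ executes only after $v$ is popped. Carefully tracking these stack states through the excision, and separately handling the boundary case $v=n$ (in which $v$ is popped only at the final flush and there is no $w$), constitute the bookkeeping heart of the proof.
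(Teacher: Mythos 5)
The paper does not actually prove this lemma---it is quoted from \cite{D20}, where the argument runs through the machinery of valid hook configurations rather than a direct bijection on preimages---so your bijective strategy is a legitimate independent route. Your forward analysis is essentially sound: the descent condition does show that $v$ pops nothing below $\pi_d$, that the outputs strictly between positions $d$ and $j$ are exactly $s(\tau_S)$, and that excising $v$ and $\tau_S$ yields $s(\tau_U)=\pi_U^H$. The problem is that none of this uses the tail-bound hypothesis: the only place it appears is the throwaway remark that $\pi_j=j$, which is never invoked again, and the inverse direction is dismissed with ``running the parallel-sort comparison in reverse.'' As written, your argument applies verbatim to \emph{any} descent $d$, and the statement is false for descents that are not tail-bound, so something essential is missing.

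The gap is in the surjectivity/inverse step. For the forward map you know that every entry of $\sigma$ strictly between $\pi_d$ and $v$ is smaller than $\pi_d$ (by the definition of $v$), which is what keeps $\pi_d$ on the stack until $v$ arrives. For the inverse map you start from an \emph{arbitrary} $\tau_U\in s^{-1}(\pi_U^H)$, and you must prove that every entry of $\tau_U$ between the position of $\pi_d$ and your insertion point is smaller than $\pi_d$ --- equivalently, that the first entry of $\tau_U$ after $\pi_d$ exceeding $\pi_d$ in fact exceeds $j$. This is exactly where tail-boundedness is needed: such an entry $u$ must be output after $\pi_d$, hence sits at a position $p>d$ of $\pi_U^H$, hence equals $\pi_{j+p-d}$ with $\pi_{j+p-d}>\pi_d$, so $(j+p-d,u)$ is the northeast endpoint of a hook in $\SW_d(\pi)$ and must lie in the tail, forcing $u=j+p-d>j$. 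Without this, the construction fails: take $\pi=21435$ and the non-tail-bound descent $d=1$, with $H$ the hook from $(1,2)$ to $(3,4)$, $\tau_U=235\in s^{-1}(\pi_U^H)$ and $\tau_S=1$. Your insertion rule produces $\sigma=23415$, but $s(23415)=23145\neq\pi$, because the entry $3$ of $\tau_U$ pops $\pi_d=2$ before the inserted $v=4$ ever arrives. You need to add the displayed tail-boundedness argument to the inverse step (and note that the forward step alone already shows your map is injective, so the missing half is precisely this).
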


Theorem~\ref{Thm:West} gives us structural information about $2$-stack-sortable permutations. Since $\mathcal W_3=s^{-1}(\mathcal W_2)$, we can combine this structural information with the Decomposition Lemma in order to gain enumerative information about $3$-stack-sortable permutations. In doing so, we must keep track of the tail length statistic, as well as one additional statistic that we now describe.

For $a\in\{0,\ldots,n\}$, we say the open interval $(a,a+1)$ is a \emph{legal space for $\pi$} if there do not exist indices $i_1<i_2<i_3$ such that $\pi_{i_3}\leq a<\pi_{i_1}<\pi_{i_2}$. Let $\leg(\pi)$ be the number of legal spaces of $\pi$. For example, if $\pi\in S_n$, then $\leg(\pi)=n+1$ if and only if $\pi$ avoids $231$. The legal spaces of $145326$ are $(0,1),(1,2),(4,5),(5,6),(6,7)$, so $\leg(145326)=5$. Imagine taking the plot of a permutation $\pi$ and adding a new point to the left of all other points. One can think of the legal spaces of $\pi$ as the vertical positions where the new point can be inserted so as to not form a new $2341$ pattern. This is relevant for us because, as we know from Theorem~\ref{Thm:West}, every $2$-stack-sortable permutation avoids $2341$. 

We will prove a new functional equation for the generating function 
\[J(t,u,v)=\sum_{n\geq 1}\sum_{\pi\in\mathcal W_2(n)}|s^{-1}(\pi)|t^nu^{\leg(\pi)-1}v^{\tl(\pi)},\] allowing us to characterise the series $J(t,1,1)=\sum_{n\geq 1}w_nt^n$ (where $w_n=|\mathcal W_3(n)|$). Transforming this functional equation to that of a related series will yield Theorem~\ref{thm:Characterisation_functional_equation}.

One contribution to the generating function $J(t,u,v)$ comes from the preimages of the identity permutations: 
\begin{equation}\label{Eq33}
\sum_{n\geq 1}|s^{-1}(\id_n)|t^nu^{\leg(\id_n)-1}v^{\tl(\id_n)}=C(tuv)-1,
\end{equation}
where $C(x)=\sum_{n\geq 0}C_nx^n=\dfrac{1-\sqrt{1-4x}}{2x}$ is the generating function of the Catalan numbers. Indeed, this follows immediately from Theorem~\ref{Thm:Knuth}. 

Now suppose we want to construct a nonempty permutation $\sigma\in\mathcal W_3$ such that $s(\sigma)=\pi$ is not an identity permutation. The Decomposition Lemma tells us that the number of ways to do this is equal to the number of ways to choose $\pi\in\mathcal W_2$, $H\in\SW_{c(\pi)}(\pi)$, $\mu\in s^{-1}(\pi_U^H)$, and $\lambda\in s^{-1}(\pi_S^H)$. In order to choose $\pi$, we first choose the standardisations $\zeta_U$ and $\zeta_S$ of $\pi_U^H$ and $\pi_S^H$ respectively. We then combine them along with the additional hook $H$ to form $\pi$. When combining them, we need to choose how many points from each of $\pi_U^H$ and $\pi_S^H$ end up in the tail of $\pi$. We also need to choose the relative heights of the points of $\pi_U^H$ compared to the points of $\pi_S^H$. Because the resulting permutation $\pi$ is supposed to be $2$-stack-sortable, one can use Theorem~\ref{Thm:West} to check that $\zeta_U$ and $\zeta_S$ must be in $\mathcal W_2$. Furthermore, we cannot have more than one point in $\pi_U^H$ outside of the tail of $\pi$ that lies above a point in $\pi_S^H$. In other words, at most one point to the left of $(c(\pi),\pi_{c(\pi)})$ can lie above a point to the right of $(c(\pi),\pi_{c(\pi)})$. If such a point exists, it must be placed in a position corresponding to one of the legal spaces of $\pi_S^H$. Using Theorem~\ref{Thm:West}, one can verify that the permutation $\pi$ produced in this fashion is necessarily $2$-stack-sortable. 

\begin{exmp}\label{Exam5}
Suppose we choose the standardisations of $\pi_U^H$ and $\pi_S^H$ to be $\zeta_U=24315678$ and $\zeta_S=315246$ respectively. Furthermore, suppose we choose to put $3$ of the points of $\pi_U^H$ and $1$ of the points of $\pi_S^H$ into the tail of $\pi$. These choices almost determine the $2$-stack-sortable permutation $\pi$. Indeed, $\pi$ must be of the shape shown in Figure~\ref{Fig15}, where the open dashed circles represent the possible places where we can put the largest point to the left of $(c(\pi),\pi_{c(\pi)})$. These open dashed circles correspond to the legal spaces of $\pi_S^H$ lying below the point $(c(\pi),\pi_{c(\pi)})$.

\begin{figure}[h]
\begin{center}
\includegraphics[width=.43\linewidth]{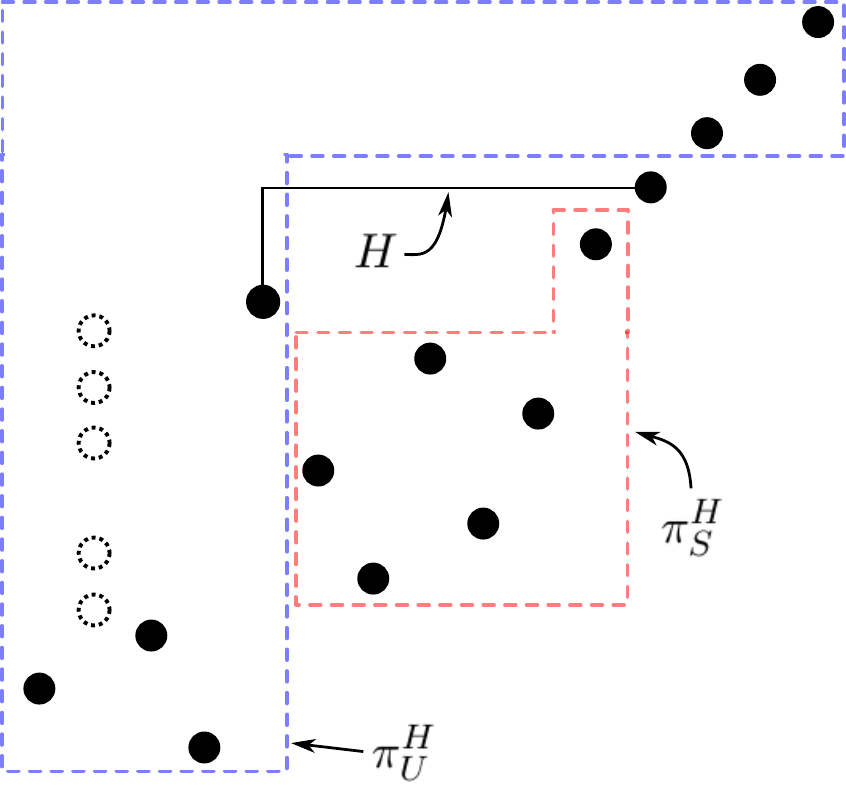}
\caption{Combining $\pi_U^H$ and $\pi_S^H$ to form $\pi$.}
\label{Fig15} \qedhere
\end{center}  
\end{figure}
\end{exmp}

\begin{thm}\label{Thm26}
We have \[J(t,u,v)=(C(tuv)-1)(1+tuJ(t,u,1))+\frac{tuv}{1-u}\left(\frac{J(t,u,1)-J(t,u,v)}{1-v}-\frac{C(tuv)-1}{v}\right)\] \[\cdot\left(\frac{J(t,1,1)-uvJ(t,1,uv)}{1-uv}-u\frac{J(t,u,1)-vJ(t,u,v)}{1-v}\right).\]
\end{thm}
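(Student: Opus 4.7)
The plan is to split the defining sum of $J(t,u,v)$ according to whether $\pi\in\mathcal W_2$ is an identity permutation. The identity contribution equals $C(tuv)-1$ by equation~(\ref{Eq33}). For the remaining, non-identity terms I would invoke the Decomposition Lemma at the tail-bound descent $d=c(\pi)$, rewriting $|s^{-1}(\pi)|$ as a sum over $H\in\SW_{c(\pi)}(\pi)$ of $|s^{-1}(\pi_U^H)|\cdot|s^{-1}(\pi_S^H)|$.

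The crucial step is to re-parameterize the resulting sum over pairs $(\pi,H)$ by triples $(\zeta_U,\zeta_S,\text{gluing data})$, where $\zeta_U$ and $\zeta_S$ are the standardisations of $\pi_U^H$ and $\pi_S^H$. By West's Theorem~\ref{Thm:West} both must lie in $\mathcal W_2$ (with $\zeta_S$ possibly empty), and conversely the combinatorial analysis preceding Example~\ref{Exam5} pins down exactly which triples reconstruct a valid $\pi\in\mathcal W_2$. The gluing data specifies (i) how many tail points of each of $\zeta_U$ and $\zeta_S$ enter the tail of $\pi$, (ii) the relative vertical interleaving of the non-tail portions, and (iii) whether an extra point is inserted to the left of $(c(\pi),\pi_{c(\pi)})$ above some point of $\pi_S^H$, which is then forced to occupy one of the legal spaces of $\pi_S^H$. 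Since $|s^{-1}|$ is invariant under standardisation, the weight factors cleanly as $|s^{-1}(\zeta_U)|\cdot|s^{-1}(\zeta_S)|$, producing the product structure of $J$'s that appears on the right-hand side.

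Translating the choices into generating-function operations is then largely mechanical. The prefactor $\tfrac{tuv}{1-u}$ encodes the inserted extra point (contributing the size $t$, a tail-like increment $v$, and the legal-space marker $u$) together with a geometric sum counting its legal-space position in $\pi_S^H$. The quotients
\[\frac{J(t,u,1)-J(t,u,v)}{1-v}\quad\text{and}\quad\frac{J(t,1,1)-uvJ(t,1,uv)}{1-uv}\]
sum over how many tail points are kept from $\zeta_S$ (marked by $v$) and from $\zeta_U$ (marked jointly by $uv$, since each kept tail point contributes both a legal space and a tail position to $\pi$). The terms $\tfrac{C(tuv)-1}{v}$ and $u\tfrac{J(t,u,1)-vJ(t,u,v)}{1-v}$ remove overcounted degenerate configurations (identity $\zeta_S$, or forced extra-point placements). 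The first summand $(C(tuv)-1)(1+tuJ(t,u,1))$ absorbs the pure identity contribution (from the ``$1$'' factor, matching equation~(\ref{Eq33})) together with the case in which $\zeta_S$ is identity and no extra point is inserted (from the $tuJ(t,u,1)$ factor).

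The main obstacle is the careful bookkeeping of the statistics $\leg(\pi)$ and $\tl(\pi)$ under the gluing. Concretely, one must verify that $\leg(\pi)-1$ splits additively, up to controlled corrections, as a function of $\leg(\zeta_U)$, $\leg(\zeta_S)$, and the interleaving and extra-point data, and similarly that $\tl(\pi)$ splits additively in terms of the tail lengths of $\zeta_U$ and $\zeta_S$ together with how many of their tail points survive into the tail of $\pi$. Only with these decomposition rules nailed down does each generating-function quotient above acquire its precise shape. A secondary but essential check is bijectivity of the reconstruction: each non-identity $\pi\in\mathcal W_2$ with each $H\in\SW_{c(\pi)}(\pi)$ must arise from exactly one triple across the two major summands, so that no double-counting occurs when their contributions are summed.
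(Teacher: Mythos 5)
Your plan is essentially the paper's proof: separate off the identity preimages (giving $C(tuv)-1$), apply the Decomposition Lemma at the tail-bound descent $c(\pi)$, reconstruct $\pi$ from the standardisations $\zeta_U,\zeta_S\in\mathcal W_2$ together with the tail/interleaving/legal-space gluing data, and convert the $\leg$ and $\tl$ bookkeeping into the quotient generating functions. The only discrepancies are cosmetic readings of the already-simplified formula rather than of the derivation (the paper's case split is on whether $c(\pi)=1$, i.e.\ whether $\zeta_U$ is an identity, and the $tuv$ prefactor accounts for the northeast endpoint of the hook $H$ rather than for the extra inserted point), so the approach matches.
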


\begin{proof}
As mentioned above, the contribution to $J(t,u,v)$ coming from preimages of identity permutations is $C(tuv)-1$. Let us now go through the procedure described above in order to find a preimage of a nonidentity permutation in $\mathcal W_2$. We first consider the case in which there are no points to the left of $(c(\pi),\pi_{c(\pi)})$ (i.e., $c(\pi)=1$). Equivalently, the standardisation $\zeta_U$ of $\pi_U^H$ is an identity permutation of some length $m\geq 1$, and $m-1$ of the points of $\pi_U^H$ are in the tail of $\pi$. We can choose any permutation in $\mathcal W_2$ to be the standardisation $\zeta_S$ of $\pi_S^H$. If $\zeta_S$ is not an identity permutation, then the number of points of $\pi_S^H$ that we place in the tail of $\pi$ can be any element of $\{0,\ldots,\tl(\zeta_S)\}$. If $\zeta_S$ is an identity permutation of some length $m'$, then the number of points of $\pi_S^H$ that we place in the tail of $\pi$ can be any element of $\{0,\ldots,m'-1\}$ (choosing all $m'$ points to go in the tail of $\pi$ would result in $\pi$ being an identity permutation). There are then $C_m$ ways to choose $\mu\in s^{-1}(\pi_U^H)$ and $|s^{-1}(\zeta_S)|$ ways to choose $\lambda\in s^{-1}(\pi_S^H)$. The total contribution to $J(t,u,v)$ coming from this case is \[tuv\frac{C(tuv)-1}{v}\left(\sum_{m'\geq 1}\sum_{\zeta_S\in\mathcal W_2(m')}|s^{-1}(\zeta_S)|t^{m'}u^{\leg(\zeta_S)-1}(1+v+\cdots+v^{\tl(\zeta_S)})-(C(tuv)-1)\right)\] 
\begin{equation}\label{Eq34}
=tu(C(tuv)-1)\left(\frac{J(t,u,1)-vJ(t,u,v)}{1-v}-C(tuv)+1\right),
\end{equation}
where the initial factor $tuv$ accounts for the northeast endpoint of $H$.

We now consider the case in which there is at least one point in the plot of $\pi$ to the left of $(c(\pi),\pi_{c(\pi)})$ (i.e., $c(\pi)\geq 2$). We choose $\zeta_S$ along with the number $\ell$ of points of $\pi_S^H$ that go into the tail of $\pi$. There are then $\leg(\pi_S^H)-\ell$ choices for the position of the highest point to the left of $(c(\pi),\pi_{c(\pi)})$. We also need to choose the standardisation $\zeta_U$ of $\pi_U^H$ and the number $j$ of points of $\pi_U^H$ appearing in the tail of $\pi$. Finally, there are $|s^{-1}(\zeta_U)|$ ways to choose $\mu\in s^{-1}(\pi_U^H)$ and $|s^{-1}(\zeta_S)|$ ways to choose $\lambda\in s^{-1}(\pi_S^H)$. The total contribution to the generating function from this case is \[tuv\left(\sum_{m\geq 1}\sum_{\zeta_U\in\mathcal W_2(m)}|s^{-1}(\zeta_U)|t^mu^{\leg(\zeta_U)-1}\sum_{j=0}^{\tl(\zeta_U)-1}v^j-\frac{C(tuv)-1}{v}\right)\] 
\begin{equation}\label{Eq35}
\cdot\left(\sum_{m'\geq 1}\sum_{\zeta_S\in\mathcal W_2(m')}|s^{-1}(\zeta_S)|t^{m'}\sum_{\ell=0}^{\tl(\zeta_S)}v^\ell\sum_{r=\ell}^{\leg(\zeta_S)-1}u^r-(C(tuv)-1)\right).
\end{equation} The term $-\dfrac{C(tuv)-1}{v}$ appears so that we do not count the contribution coming from \eqref{Eq34} again. The term $-(C(tuv)-1)$ occurs because if $\zeta_S=\id_{m'}$, then we cannot choose all of the points of $\pi_S^H$ to appear in the tail of $\pi$. The last displayed expression simplifies to 
\[
tuv\left(\frac{J(t,u,1)-J(t,u,v)}{1-v}-\frac{C(tuv)-1}{v}\right)\] 
\begin{equation}\label{Eq36}
\cdot\left(\frac{J(t,1,1)-uvJ(t,1,uv)}{(1-u)(1-uv)}-u\frac{J(t,u,1)-vJ(t,u,v)}{(1-u)(1-v)}-(C(tuv)-1)\right).
\end{equation}

Finally, $J(t,u,v)$ is the sum of the expressions in \eqref{Eq33}, \eqref{Eq34}, and \eqref{Eq36}. This sum simplifies to the expression stated in the theorem. 
\end{proof}

\begin{remark}
Theorem~\ref{Thm26} provides a functional equation corresponding to a recurrence relation for the numbers $w_n=|\mathcal W_3(n)|$ that was obtained in \cite{D20}. In fact, that article gave a more general recurrence that counts $3$-stack-sortable permutations according to their descent and peak statistics. These recurrences were vastly generalised in \cite[Theorem 9.8]{DefantTroupes}, where they were phrased in terms of postorder readings and special sets of colored binary plane trees called troupes.
We wish to remark that the proof of Theorem~\ref{Thm26} generalises immediately to give functional equations corresponding to these more general recurrences. For the sake of brevity, we omit the details of this discussion. However, there are a couple of special cases that are worth mentioning. 

A permutation of length $n$ is called \emph{alternating} if its descents are precisely the even elements of $[n-1]$. Let $\text{ALT}$ denote the set of alternating permutations of odd length. If we replace each occurrence of the generating function $C(tuv)=\dfrac{1-\sqrt{1-4tuv}}{2tuv}$ in Theorem~\ref{Thm26} with \[\frac{1-\sqrt{1-4(tuv)^2}}{2tuv},\] then the resulting generating function $J(t,u,v)$ is such that \[J(t,1,1)=\sum_{n\geq 1}|\mathcal W_3(n)\cap\text{ALT}|t^n.\]

Let $\text{EDP}$ denote the set of permutations $\pi$ in which every descent is a peak. If we replace each occurrence of the generating function $C(tuv)=\dfrac{1-\sqrt{1-4tuv}}{2tuv}$ in Theorem~\ref{Thm26} with \[\frac{1-tuv-\sqrt{1-2tuv-3(tuv)^2}}{2(tuv)^2},\] then the resulting generating function $J(t,u,v)$ is such that \[J(t,1,1)=\sum_{n\geq 1}|\mathcal W_3(n)\cap\text{EDP}|t^n. \qedhere\]
\end{remark}

In the remainder of this section, we transform the series $J(t,u,v)$ into a new series $Q(t,x,a)$ that satisfies the simpler functional equation of Theorem \ref{thm:Characterisation_functional_equation}.

By the definition of $J$, we clearly have $J(t,u,v)\in\mathbb{Z}[u,v][[t]]$, and for each monomial $u^k v^{\ell} t^n$ appearing, we have $\ell\leq n,k$. As a consequence, the series
\[J_{1}(t,u,w)=J\left(t,u,\frac{w}{tu}\right)\]
lies in $\mathbb{Z}[u][[t,w]]$. Observe that the series
\[J_{2}(t,u,w)=u\frac{wJ_{1}(t,u,w)-tuJ_{1}(t,u,tu)}{w-tu}-u(C(w)-1)\]
must also lie in $\mathbb{Z}[u][[t,w]]$. Finally we define the series $Q(t,x,a)\in\mathbb{Z}[x][[t,a]]$ by
\[Q(t,x,a)=J_{2}\left(t,x+1,\frac{a}{(1+a)^2}\right).\]
This last transformation is convenient as the terms $C(w)$ that appear in the functional equation for $J_{2}(t,u,w)$ are now $C(\frac{a}{(1+a)^2})=a+1.$
The equation in terms of $Q(x,a)\equiv Q(t,x,a)$ is
\begin{equation}\label{eq:transformed_fe}Q(x,a)=t(x+1)^2(1+a)^2+t(1+x)\frac{Q(x,a)-Q(x,0)}{a}\left((1+a)^2+a\frac{Q(x,a)-Q(0,a)}{x}\right)\end{equation} \[+t(1+x)aQ(x,a),\]
and the generating function for 3-stack-sortable permutations is $W(t)=J(t,1,1)=J_{2}(t,1,0)=Q(t,0,0)$.

To see that \eqref{eq:transformed_fe} characterises the generating function $Q(t,x,a)$, we write
\[Q(t,x,a)=\sum_{n=0}^{\infty}t^{n}Q_{n}(x,a),\]
where each $Q_{n}(x,a)\in\mathbb{Z}[x][[a]]$. The equation \eqref{eq:transformed_fe} immediately implies that $Q_{0}(x,a)=0$, from which it follows that $Q_{1}(x,a)=(x+1)^2(a+1)^2$. We can then rewrite \eqref{eq:transformed_fe} as the following recurrence for $n\geq 2$:
\begin{align}Q_{n}(x,a)&=(1+x)(1+a)^2\frac{Q_{n-1}(x,a)-Q_{n-1}(x,0)}{a}+(1+x)aQ_{n-1}(x,a) \nonumber\\
&~~+\frac{t(1+x)}{x}\sum_{j=1}^{n-2}(Q_{j}(x,a)-Q_{j}(x,0))(Q_{n-j-1}(x,a)-Q_{n-j-1}(0,a)).\label{eq:recursiveQ}\end{align}
Clearly this uniquely defines the series $Q_{n}(x,a)$, completing the proof of Theorem \ref{thm:Characterisation_functional_equation}. Moreover, a simple induction shows that each series $Q_{n}(x,a)$ is in fact a polynomial of degree $n+1$ in each variable.

\begin{remark}
We note the functional equation \eqref{eq:transformed_fe} appears to be just outside the boundary of functional equations that can be systematically solved. In particular, this equation has two catalytic variables $a$ and $x$ and takes the form of a polynomial equation in $t$, $a$, $x$, $Q(x,a)$, $Q(x,0)$ and $Q(0,a)$. If either $Q(x,0)$ or $Q(0,a)$ did not appear there would only be one catalytic variable, in which case the series would be algebraic \cite{BM-J}. Moreover, increasingly systematic methods are being developed for functional equations with two catalytic variables where the polynomial is linear in $Q(x,a)$ (see for example \cite{B-BM,BM-M,B-BM-R,DHRS}), whereas \eqref{eq:transformed_fe} is quadratic in $Q(x,a)$. 
\end{remark}

\section{Series generation}\label{Sec:Series}

\subsection{Computing coefficients of the generating function}

We now describe numerous algorithms to compute the numbers $w_n=Q_{n}(0,0)$ for $n\leq N$, where $N$ is some positive integer. Each algorithm is based on the recurrence \eqref{eq:recursiveQ}.

The most direct technique would be to compute each polynomial $Q_{n}(x,a)$ directly from \eqref{eq:recursiveQ}. However, computing a product of $Q_{j}(x,a)-Q_{j}(x,0)$ and $Q_{n-j-1}(x,a)-Q_{n-j-1}(0,a)$ would take $\Theta(n^4)$ individual integer multiplications. Since there are $\Theta(n^2)$ such products to compute in total and the size of the integers is $\Theta(n)$, the algorithm would then have complexity $\Theta(N^8)$ (using naive multiplication). We note that this could be reduced to around $\Theta(N^{5}\log(N)^3)$ using more sophisticated polynomial and integer multiplication algorithms (see \cite{HH19}) and using a modular algorithm (see \cite[chapter 5]{GG13}), which we will describe shortly.

In order to avoid the bulk of these multiplications, we compute the values $Q_{n}(x,a)$ for fixed values of $x$ and $a$, rather than computing the individual coefficients. The nature of the equation means that $Q_{n}(x,a)$ cannot be determined directly from previous terms for $a=0$ or $x=0$, but it can be determined for any $a,x\neq 0$. Then, knowing that $Q_{n}(x,a)$ is a polynomial of degree at most $n+1$ in $a$ allows $Q_{n}(x,0)$ to be determined from $Q_{n}(x,1),Q_{n}(x,2),\ldots,Q_{n}(x,n+2)$. To be precise, these are related by the equation
\begin{equation}\label{eq:poly1}\sum_{j=0}^{n+2}(-1)^{j}{n+2\choose j}Q_{n}(x,j)=0.\end{equation}
Similarly, we can determine $Q_{n}(0,a)$ from $Q_{n}(1,a),Q_{n}(2,a),\ldots,Q_{n}(n+2,a)$ using the equation
\begin{equation}\label{eq:poly2}\sum_{j=0}^{n+2}(-1)^{j}{n+2\choose j}Q_{n}(j,a)=0.\end{equation}
Thus, the algorithm runs as follows:
\begin{algorithm}[caption={Computing $w_n$ for $n\leq N$.}, label={algo1}]
input: an integer $N$
output: a list output containing the numbers $w_n$ for $n\leq N$
for a from 0 to N+2
    for x from 0 to N+2
        $Q_{1}(x,a)$ $\gets$ $(x+1)^2(a+1)^2$
for n from 2 to N
    for x from 1 to N+2
        for a from 1 to N+2
            compute $Q_{n}(x,a)$ using $\eqref{eq:recursiveQ}$.
    for x from 1 to N+2
        compute $Q_{n}(x,0)$ using $\eqref{eq:poly1}$.
    for a from 0 to N+2
        compute $Q_{n}(0,a)$ using $\eqref{eq:poly2}$.
    Append $Q_{n}(0,0)$ to output.
\end{algorithm}

As equation \eqref{eq:recursiveQ} contains a sum in which $j$ runs from $1$ to $n-2$, this equation takes linear time to compute, so the number of operations in Algorithm~\ref{algo1} is of order $\Theta(n^{4})$. However, a nontrivial proportion of these operations are multiplication operations, which need to be taken into account in order to determine the time complexity of this algorithm. An efficient way to apply these multiplications is to convert this to a modular algorithm, which we describe in the next subsection. 
\subsection{Modular algorithms}
In order to avoid multiplying very large numbers, which can be computationally unwieldy, we implement a modular algorithm. That is, we choose a sequence of primes $p_{1},p_{2},\ldots, p_{k}$ whose product we know to exceed $w_n$ for $n\leq N$. Then, for each $j=1,\ldots, k$, we run the entirety of Algorithm \ref{algo1} with all operations being modulo $p_{j}$. For each prime, we then return the output values $w_n$ modulo that prime. Finally, the actual values $w_n$ can be recovered using the Chinese Remainder Theorem.

In theory, in the limit $N\to\infty$, the optimal time complexity (up to some constant) is achieved by setting $p_{j}$ to be the $j$th-smallest prime. This way, the number of primes required will be $\Theta(N/\log(N))$, while the size of the primes is typically $\Theta(\log(N))$, meaning that the multiplication operations typically take $\log(N)^2$ time (using naive multiplication). Since there are $\Theta(n^{4})$ such operations for each prime, the theoretical time complexity is $\Theta(N^{5}\log(N))$. Using a more efficient multiplication algorithm can reduce this theoretical time complexity to $\Theta(N^{5}\log(\log(N)))$.

\subsection{Implementation of algorithm and results}

In this section we describe how we implemented this algorithm to produce 1000 terms of the series $W(t)$. The program is given on the second author's website, as are the terms produced \cite{EPwebsite}.

As discussed in the previous section, the purpose of using a modular algorithm is to avoid time-intensive multiplications involving very large numbers. In practice, multiplications of (unsigned) 64-bit integers are carried out at the hardware level, so it is practical to insist that the primes be small enough that all multiplications can be applied in this way. That is, each prime $p_{j}$ should be less than $2^{32}$. While adhering to this restriction, we want to use the minimum possible number of primes, as the computation must be carried out for each one. This is achieved by setting each prime $p_{j}$ to be the $j$th-{\em largest} prime below $2^{32}$.

Recall that we require the product $P:=p_{1}p_{2}\cdots p_{k}$ to exceed $w_{N}$. Since we do not have good upper bounds on the numbers $w_{n}$ for large $n$, it is inefficient to choose the number of primes $k$ such that we can prove $P>w_{N}$ in advance. Instead we choose $k$ for which we simply {\em believe} that $P>N w_{N}$ as then we can apply the following proposition after the algorithm has run to prove that its output really is the desired sequence:
\begin{prop}\label{prop:algo_proof}
If the output sequence $\widetilde{w}_{1},\widetilde{w}_{2},\ldots,\widetilde{w}_{N}$ of the algorithm satisfies $N\widetilde{w}_{n}<P$ for each $n$, then  $\widetilde{w}_{n}=w_{n}$ for each $n$.
\end{prop}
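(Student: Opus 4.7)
The plan is to observe that, since for each prime $p_j$ the modular algorithm performs an exact computation in $\mathbb{Z}/p_j\mathbb{Z}$, the Chinese Remainder Theorem guarantees that $\widetilde{w}_n \equiv w_n \pmod{P}$ and that $\widetilde{w}_n$ is the canonical representative in $[0, P)$. Consequently, $\widetilde{w}_n = w_n$ if and only if $w_n < P$, so the task reduces to showing that $w_n < P$ for every $n \in \{1,\dots,N\}$.

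I would prove this by strong induction on $n$. The base case $n = 1$ is immediate because $w_1 = 1 < P$. For the inductive step, assume $\widetilde{w}_m = w_m$ for all $m < n$, and combine this with the elementary growth bound
\[
w_n \leq n\, w_{n-1}.
\]
Then, using the hypothesis $N\widetilde{w}_{n-1} < P$,
\[
w_n \leq n\, w_{n-1} = n\, \widetilde{w}_{n-1} \leq N\, \widetilde{w}_{n-1} < P,
\]
which gives $w_n < P$ and completes the induction.

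The main obstacle is proving the growth bound $w_n \leq n\, w_{n-1}$ rigorously. The natural strategy is to fix a distinguished entry (for instance the maximum value $n$ or the final letter $\pi_n$) of an arbitrary $\pi \in \mathcal{W}_3(n)$ and send $\pi$ to the pair $(i, \pi')$, where $i$ records the position of that entry and $\pi'$ is the standardization of $\pi$ with that entry deleted. This map is manifestly injective, and its image lands in $[n]\times \mathcal{W}_3(n-1)$ precisely when the class $\mathcal{W}_3$ is closed under the chosen deletion operation. This closure is obvious for $\mathcal{W}_1 = \mathrm{Av}(231)$, but breaks down for $\mathcal{W}_2$---the permutation $35241 \in \mathcal{W}_2$ contains $3241 \notin \mathcal{W}_2$ as a sub-pattern---so any proof for $\mathcal{W}_3$ will need to exploit its finer structure, most likely via \'Ulfarsson's characterization or the Decomposition Lemma. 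A fallback route is to invoke any rigorous upper bound on $\mu_3$ to secure $w_n \leq C\, w_{n-1}$ for an explicit constant $C$ valid for all sufficiently large $n$, and then verify the finitely many remaining small cases directly.
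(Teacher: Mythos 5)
Your skeleton is exactly the paper's: reduce via the Chinese Remainder Theorem to showing $w_n<P$, then deduce this by induction from the growth bound $w_n\leq n\,w_{n-1}$ combined with the hypothesis $N\widetilde{w}_{n-1}<P$. That part is correct. The genuine gap is the bound $w_n\leq n\,w_{n-1}$ itself, which you explicitly leave open, and the routes you sketch do not close it. The distinguished entry to delete is neither the maximum value nor the last letter but the \emph{value} $1$: removing the entry $1$ from a $3$-stack-sortable permutation of $[n]$ and decreasing every remaining entry by $1$ yields a $3$-stack-sortable permutation of $[n-1]$. This holds because deletion of the smallest entry commutes with the stack-sorting map: writing $\pi=LmR$ with $m$ the maximum, the entry $1$ lies in $L$ or in $R$ (for $n\geq 2$), and induction on the recursion $s(\pi)=s(L)s(R)m$ shows that $s$ applied to $\pi$ with the $1$ deleted is, up to standardisation, $s(\pi)$ with the $1$ deleted; iterating three times, $s^3(\pi)=\id_n$ forces the deleted-and-standardised permutation $\pi'$ to satisfy $s^3(\pi')=\id_{n-1}$. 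Hence $\mathcal W_k$ is closed under deleting the smallest entry for \emph{every} $k$, with no need for \'Ulfarsson's characterisation or the Decomposition Lemma; recording the position of the deleted $1$ makes $\pi\mapsto(i,\pi')$ injective into $[n]\times\mathcal W_3(n-1)$. Your counterexample ($35241\in\mathcal W_2$ containing $3241\notin\mathcal W_2$) correctly shows that deleting the maximum fails already for $\mathcal W_2$, which is precisely why the minimum, not the maximum, is the right choice.

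Your fallback route is also not viable. A rigorous upper bound on $\mu_3$ controls $\limsup_n w_n^{1/n}$, not the ratios $w_n/w_{n-1}$, so it does not by itself yield $w_n\leq C\,w_{n-1}$ (supermultiplicativity of $(w_n)$ under direct sums gives a \emph{lower} bound on the ratios, not an upper one). Even the crude consequence $w_n\leq\mu_3^{\,n}\leq 12.533^{\,n}$, which does follow from Fekete's lemma, is about $10^{1098}$ at $n=1000$ and thus far exceeds $P\approx 2.9\cdot 10^{1011}$, so it cannot substitute for the inductive argument in the regime where the proposition is actually applied.
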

\begin{proof}
By definition, we know that $\widetilde{w}_{n}$ is the remainder when $w_{n}$ is divided by $P$, so it suffices to show that each $w_{n}<P$.

We start with the fact that $w_{n}\leq nw_{n-1}$. This follows from the fact that removing the entry $1$ from a 3-stack-sortable permutation in $S_n$ and then reducing each remaining entry by $1$ yields a 3-stack-sortable permutation in $S_{n-1}$. It then follows from a simple induction on $n$ that $w_{n}<P$ and, therefore, $w_{n}=\widetilde{w}_{n}$.\end{proof}

We ran this algorithm with size $N=1000$ and with $k=105$ primes. The computation took 46 hours of computing, using 7.8GB of memory on a Dell Power Edge R820, 2.70GHz with 8 cores. The result of this computation was the sequence of numbers $\widetilde{w}_n$ for $1\leq n\leq 1000$, where $\widetilde{w}_n$ is the remainder when $w_{n}$ is divided by the product
\[P:=p_{1}p_{2}\cdots p_{105}\approx 2.9\cdot 10^{1011}.\]
The largest value of $\widetilde{w}_{n}$ was $\widetilde{w}_{1000}\approx 2.4\cdot 10^{975}$.
Hence, by Proposition \ref{prop:algo_proof}, the output is indeed the desired sequence $w_{1},w_{2},\ldots,w_{1000}$, which counts $3$-stack-sortable permutations.

\section{Initial ratio analysis}\label{Sec:Analysis}
In this and subsequent sections, we apply a number of numerical techniques to analyse the series so as to estimate the asymptotic behaviour of the coefficients. This, of course, is dominated by the growth constant $
\mu_3,$ but we are also interested in the sub-dominant terms, which are more difficult to estimate.

Let us first look at the ratios. If we have a simple power-law singularity, so that the generating function behaves as
\begin{equation} \label{eqn:F}
F(x) \sim C_1(1-\mu_3 x)^{-\theta},
\end{equation}
then with $[x^n]F(x) = c_n,$ it follows that the ratio of successive coefficients
\begin{equation} \label{rat}
r_n =\frac{c_n}{c_{n-1}} = \mu_3 \left ( 1 + \frac{g}{n} + o\left( \frac{1}{n} \right ) \right ),
\end{equation}
 with $g = \theta -1.$
A plot of the ratios of the 1000-term series against $1/n$ is shown in Figure~\ref{fig:rat}.
\begin{figure}[h!] 
   \centering
   \includegraphics[width=3in]{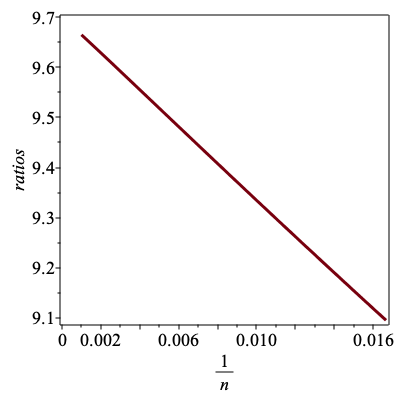} 
   \caption{Ratios vs. $1/n$ for $60 \le n \le 1000.$}
   \label{fig:rat}
\end{figure}

The plot appears to be linear and is going to a limit indistinguishable from $\mu_3=9.70$ at this level of precision. The gradient gives an estimate of the exponent $g,$ but not a very precise one.

Rather, if we know the value of $\mu_3,$ it follows from \eqref{rat} that we can estimate the exponent $g$
from estimators 
\begin{equation}\label{g}
g_n = \left (\frac {r_n}{\mu_3} - 1 \right ) n =g + o(1).
\end{equation}

Taking $\mu_3 = 9.70,$ we find for the estimators $g_n$ the values shown in Figure~\ref{fig:g}. These are plotted against $1/n,$ so we are interested in the point of intersection of the curve with the $y$-axis. It can be seen that the gradient of this plot changes sign at around $n=120,$ and there is a lot of curvature, so it is hard to estimate the limit as $n \to \infty$ with any precision. Nevertheless, one might guess a value around $-3.5$ or greater, which would imply that $\theta \approx -2.5$ in  \eqref{eqn:F}. But all one can really say with any confidence is that $g > -3.6.$ Note too that if we only had some 100 terms at our disposal we might erroneously conjecture that $g \approx -4.$

The curvature in Figure~\ref{fig:g} would suggest that the $o(1)$ correction term does not decay as $1/n,$ which would be the case for a simple power-law singularity, but rather decays more slowly. In the next section we give our reasons for believing that there are logarithmic factors in the generating function; we will give a more refined ratio analysis taking this into account in Section~\ref{sec:5.1}.

\begin{figure}[h!] 
   \centering
   \includegraphics[width=3in]{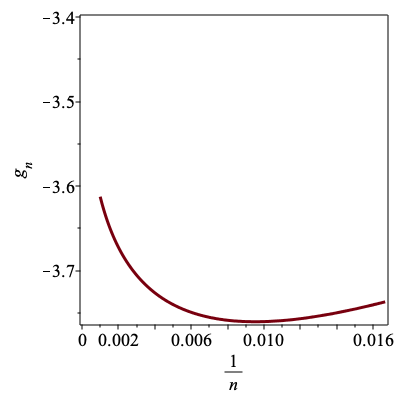} 
   \caption{$g$-estimates vs. $1/n$ for $60 \le n \le 1000.$}
   \label{fig:g}
\end{figure}

\section{Differential-approximant analysis}
Next, we performed a differential-approximant analysis. As described in \cite{G89},
the method of differential-approximants  fits the known coefficients of a power series to a number (typically 100 or more) of holonomic differential equations, and uses the critical parameters (the radius of convergence and exponent at that point) of those differential equations as estimators of the corresponding quantities of the underlying series expansion. 

More precisely, one uses the known series coefficients to find polynomials $Q_{k}(t)$ and $P(t)$ such that the power series solution $\widetilde{F}(t)$ of the  holonomic differential equation

\begin{equation} \label{eqn:da}
\sum_{k=0}^M Q_{k}(t)\left (t\frac{{\rm d}}{{\rm d}t}\right )^k \widetilde{F}(t) = P(t)
\end{equation}
agrees with the known coefficients of the function $F(t)$ being approximated. We refer to the order $M$ of this ODE as the {\em order} of the approximant.

Constructing such {\em differential-approximants} (DAs) is straightforward computationally, and it only involves solving a system of linear equations. Many such DAs are constructed by varying the degrees of the polynomials $Q_{k}(t)$ and $P(t)$ while still using most, or all, of the known series coefficients. The singularities are given by the zeros $z_i, \,\, i=1, \ldots , N_M$, of $Q_M(t),$ where $N_M$ is the degree of $Q_M(t).$   We take as the dominant singularity that which is both closest to the origin and common to all (or almost all) the DAs.
Critical exponents $\theta_i$ (\ref{eqn:F}) follow from the indicial equation of the DA. For the simplest (and most common) situation where there is a single root of $Q_M(t)$ at $z_i,$  
$$ \theta_i=M-1-\frac{Q_{M-1}(z_i)}{z_iQ_M'(z_i)}. $$ Slightly more complicated expressions are known for the cases of double, triple, etc. roots.
Further details of the method can be found in \cite{G89}.

 We used the 1000-term series and produced differential-approximants of 2nd to 8th-order. To get an idea of the rate of convergence, we analysed the 125-term series, then the 250-term series, then the 500-term series, and finally the full 1000-term series. Results are quoted with a level of precision representing the precision with which the estimates agreed among themselves. This is not to be taken as an estimate that is accurate to the number of quoted digits. Rather, one must consider the trends as the order of the underlying differential equation is increased, and also as the number of coefficients used is increased. Results were consistent across orders, but were slightly more consistent within orders as the order increased. That is to say, the standard deviation of estimates of the radius of convergence decreased slightly as the order of the approximants increased. 
 
 We show in Table~\ref{tab:xc} the estimates of the radius of convergence $x_c,$ and in Table~\ref{tab:exp} the corresponding exponent,
 which is $-\theta$ in the notation of equation \eqref{eqn:F}.
 
 \begin{table}
\begin{center}
\begin{tabular}{|l|l|l|l|l|}
\hline
Order & 125 terms & 250 terms & 500 terms & 1000 terms \\ \hline
2 & 0.10309660 & 0.1030966471 & 0.10309664855 & 0.1030966486134 \\
3 & 0.10309662 & 0.1030966481 & 0.10309664860 & 0.1030966486156 \\
4 & 0.10309663 & 0.1030966482 & 0.10309664861 & 0.1030966486158 \\
5 & 0.10309662 & 0.1030966482 & 0.10309664861 & 0.1030966486158 \\
6 & 0.10309662 & 0.1030966482 & 0.10309664861 & 0.1030966486158 \\
7 & 0.10309663 & 0.1030966484 & 0.10309664861 & 0.1030966486158 \\
8 & 0.10309663 & 0.1030966484 & 0.10309664861 & 0.1030966486158 \\
\hline
\end{tabular}
\caption{Estimates of radius of convergence, $x_c$} \label{tab:xc}
\end{center}
\end{table}

\begin{table}
\begin{center}
\begin{tabular}{|l|l|l|l|l|}
\hline
Order & 125 terms & 250 terms & 500 terms & 1000 terms \\ \hline
2 & 2.427 & 2.366& 2.314 & 2.276 \\
3 & 2.422 & 2.367 & 2.326 & 2.295 \\
4 & 2.442 & 2.368 & 2.328 & 2.282 \\
5 & 2.333 & 2.365 & 2.315 & 2.267 \\
6 & 2.415 & 2.360 &2.315 & 2.759 \\
7 & 2.415 & 2.363 & 2.315 & 2.250 \\
8 & 2.415 & 2.362 & 2.314 & 2.247 \\
\hline
\end{tabular}
\caption{Estimates of the exponent $-\theta.$} \label{tab:exp}
\end{center}
\end{table}

Using the 1000-term series and 4th order to 8th-order approximants, we estimated the radius of convergence as $x_c=0.103096648616(3).$
The exponent estimates are not stable and are decreasing with the number of terms. As we will see, this is characteristic of a generating function with a confluent logarithmic term, and we suggest that these exponent estimates will continue to decrease with increasing numbers of terms (approximately as $1/\log(n)$), eventually reaching a limit around 2.0.

The growth constant is therefore estimated as $\mu_3=1/x_c=9.69963634535(30),$ which is satisfyingly close to the ratio estimate above of $9.70$ but, of course, far more precise. The exponent, taken at face value, is also moderately consistent with the ratio analysis value of less than $-\theta \approx 2.5$ above.

However, there are other apparent singularities very close to $x_c.$ With a 300-term series, these are at $x=0.103096649223 \cdots,$  at $x=0.1030966541 \pm 4.77\times 10^{-9} i$, and at $x=0.1032969769 \cdots.$ With the 1000-term series, there are many more such close singularities. Having multiple singularities very close together is an indicator of a confluent singularity (i.e., not a pure power-law), and in particular of confluent logarithms.

A discussion of the behaviour of differential-approximants applied to the analysis of such a confluent singularity can be found in \cite{EG18}, where the analysis of the generating function of 4-valent planar Eulerian orientations is given. In that case, the generating function had a dominant singularity $E(x) = \frac{-x(1-\mu x)}{\log(1 - \mu x)},$ and the differential-approximants gave very precise estimates of the growth constant $\mu$ but suggested that the exponent was around $1.30.$ Additionally, there were other, less precisely located singularities whose location differed by 1 part in $10^{-7}$ from the true singularity. That is the hallmark of a confluent logarithmic singularity. (Loosely speaking, it seems to reflect the approximant attempting to prescribe a branch cut from $x_c$ to $\infty.$)

As another example, we constructed the test function $$F(x) = \frac{-x^3(1-x)^2}{\log^3(1-x)} \left ( 1 + e^x \right ).$$ Using 3rd-order differential-approximants and 150 terms of the series, we found the dominant singularity to be at $x_c \approx 0.999999528\cdots$ with an exponent of $2.46217.$ There are other apparent singularities at $x=1.00000395\cdots$ and at $x=1.0221\cdots .$
Note that in both these cases the exponent is wrongly estimated by the differential-approximants, in each case being some 20-25\% greater than the true value.

We suggest that the same phenomenon is happening with the generating function for $3$-stack-sortable permutations and that the correct exponent value is more likely to be around $2.0$, with a confluent logarithmic term. Of course, we are not claiming to be able to distinguish between an exponent value of, say, 1.9 with a confluent logarithm, or $2.1$ and a confluent logarithm, or power of a logarithm, but in our experience confluent logarithms are usually associated with integer or (rarely) half-integer exponents. We tentatively suggest that the exponent is exactly $2.0$, and provide further evidence for this below.

The conclusion from this analysis is the conjecture that the generating function behaves as
\begin{equation} \label{W}
W(t) \sim C_0(1-\mu_3 t)^\alpha \cdot \log^\beta(1-\mu_3 t),
\end{equation}
with $\alpha \approx 2$. As we show below, if $\alpha=2$ exactly, our best estimate of $\beta$ is $-2$, but with considerable uncertainty. Consequently,
 \begin{equation} \label{Wcoeff}
 [t^n]W(t)=w_n\sim \frac{c_0\mu_3^n}{  n^{(\alpha+1)}\cdot \log^\lambda{n}} ,
 \end{equation}
where $\mu_3 = 9.69963634535(30),$ $\alpha \approx 2,$ and $\lambda = -\beta$ if $\alpha$ is not a positive integer, and $\lambda=-\beta+1$ if $\alpha$ is a positive integer. In the next section, we provide additional evidence for this conjectured form. 

\section{Series extension}
The conjectured presence of logarithms greatly complicates the analysis. Whenever logarithms are involved, one really needs very long series to extract believable asymptotics from ratio-based analysis.
In an attempt to get further terms, we use the method of series extension to obtain many more {\em approximate} coefficients. In this way, we have extended the series by 1000 approximate terms, using differential-approximants to estimate the additional coefficients.  The details of this method are given in \cite{G16}. These 1000 approximate terms are given on the second author's website \cite{EPwebsite}.

The basic idea is to use families of differential approximants to predict higher order terms. The rationale here is that every differential approximant, being a holonomic differential equation with a power series solution, generates, in principle, all coefficients. Of course, unless one has stumbled upon the exact solution, which sometimes happens, all coefficients beyond the order used to obtain the differential approximant will be approximate. 

In extending the series, we construct many (typically 100 or more) differential approximants obtained by varying the degrees of the polynomials multiplying the various derivatives (the $Q_k(t)$ and $P(t)$ in equation~\eqref{eqn:da}) and taking the mean of these coefficients (discarding outliers). We then use the standard deviation to estimate the accuracy of the coefficients. As the order of the coefficients increases, the calculated error also increases, and we stop when the error (taken to be 1 standard deviation), exceeds some desired value.  

As we propose to use these coefficients in a ratio type analysis, even 5 or 6 digit precision is often sufficient, as the error in the ratio is visually imperceptible. In this case we obtained 1000 additional, approximate terms that we believe to be accurate, at worst, to 20 significant digits. 

 Initial reaction to this method is often one of disbelief. As a {\em proof of concept}, we first demonstrate the method on the known series for two stacks in parallel (tsip) \cite{EPG17}, known to 501 terms. Mirroring our initial knowledge of the 3-stack-sortable series, which was first extended by the first author to 174 terms \cite{D20}, we take the first 174 coefficients of the tsip series and predict the next 327 coefficients. We use 6th-order differential approximants. The correct value of the coefficient of $t^{501}$ is  $1.947305150994482942849937863820882187009\ldots \times 10^{451}.$ The predicted value is $1.9473051509944829428499378638083404 \times 10^{451},$ and the predicted accuracy is 29 digits, which is seen to be the case. Lower order terms are predicted to greater accuracy. Similarly, originally given the 174 term series for 3-stack-sortable permutations, we generated a further 530 approximate terms. When we extended the series, initially to 300 terms, we  confirmed that our estimate of the coefficient of $t^{300}$ was correct to 29 significant digits, as expected.

\subsection{Analysis of extended series}\label{sec:5.1}
We have repeated the ratio analysis with this extended series, now with 2000 terms,  and everything remains consistent with the results of the analysis of the exact series. But we can now extract more precise information and start looking for the effect of confluent logarithmic terms.

A plot of the ratios using the estimated terms is shown in Figure~\ref{fig:rat2000}. This should be visually indistinguishable from the corresponding plot with the exact (but unknown) coefficients, as the ratios are expected to be accurate to more than 10 significant digits.
\begin{figure}[h!] 
   \centering
   \includegraphics[width=3in]{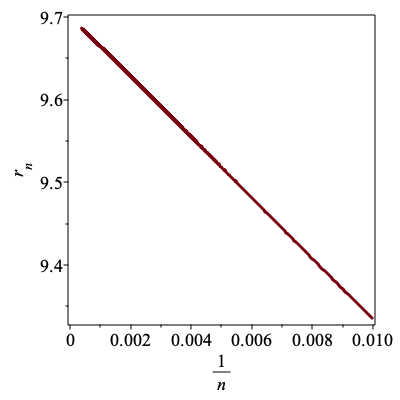} 
   \caption{Ratios vs. $1/n$ for $100 \le n \le 2000.$}
   \label{fig:rat2000}
\end{figure}

The plot is still linear and is still going to a limit indistinguishable from $\mu_3=9.70$ at this level of precision. However, we can do considerably better than this with the extended series.
First, we show in Figure~\ref{fig:rl} a plot of the linear intercepts of the ratios 
\begin{equation}\label{eq:li}
\ell_n = n\cdot r_n - (n-1)\cdot r_{n-1} \sim \mu_3\left ( 1 + o(1) \right ),
\end{equation}
 where $r_n$ is just the ratio of the coefficients. Constructing the linear intercepts eliminates the dominant $O(1/n)$ term in the ratio plot (see equation \eqref{rat}). From this figure, it is clear that one needs some 300 terms before the fact that the plot gradient changes sign becomes apparent. This plot reaches its maximum at $n=202$ and appears to be going to a limit below 9.70, and indeed rather to 9.6996, consistent with the (much more precise) differential-approximant analysis.
The extra terms of the extended series makes this behaviour manifest. With only 174 terms, one might well conjecture that the limit was slightly greater than 9.70, as the first author did in \cite{D20}.

\begin{figure}[h!] 
   \centering
   \includegraphics[width=3in]{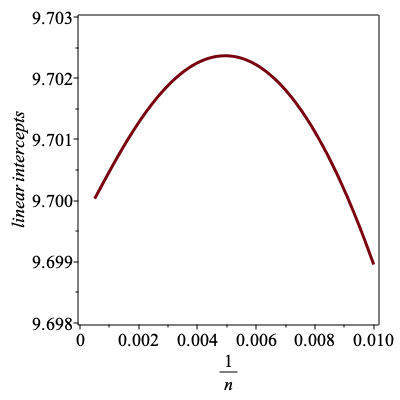} 
   \caption{Linear intercepts, which approach $\mu_3$ vs. $1/n$ for $100 \le n \le 2000.$}
   \label{fig:rl}
\end{figure}

We repeated the simple analysis of exponent estimators $g_n$ with the extended series. The results are shown in Figure~\ref{fig:gext}, which should be compared to the Figure~\ref{fig:g}, produced using only the exact series coefficients. From the extended series, one concludes that $g > -3.5,$ but estimating the intercept with the $y$-axis accurately is not really possible.

\begin{figure}[h!] 
   \centering
   \includegraphics[width=3in]{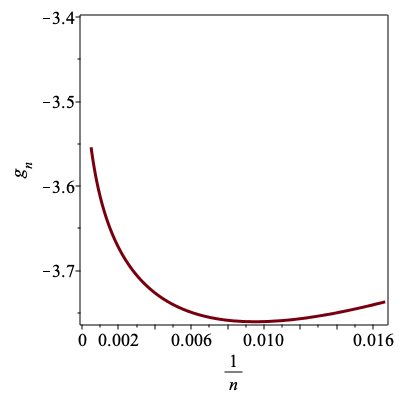} 
   \caption{ $g$-estimates vs. $1/n$  for $50 \le n \le 2000.$}
   \label{fig:gext}
\end{figure}

Given the evidence from the differential-approximant analysis that confluent logarithms are likely present, we can say more about the asymptotics, and we can also better understand the poor convergence of the $g_n$ estimators.

 From Flajolet and Sedgewick \cite[page 385]{FS09},  we see that if
\begin{equation} \label{eqn:general}
f(x) = (1-\mu \cdot x)^{\alpha}\left ( \frac{1}{\mu \cdot x} \log \frac{1}{1-\mu \cdot x} \right )^\beta,
\end{equation}
 then 
\begin{equation}\label{eqn:coeff}
f_n=[x^n]f(x) \sim \frac{ \mu^n \cdot n^{-\alpha-1}}{\Gamma(-\alpha)}(\log{n})^\beta \left(1+\frac{c_1}{\log{n}}+\frac{c_2}{\log^2{n}} +\frac{c_3}{\log^3{n}} + \frac{c_4}{\log^4{n}} + \cdots \right ),
\end{equation}
 where $$c_k={\beta \choose k}\Gamma{(-\alpha)} \frac{d^k}{ds^k}\frac{1}{\Gamma(s)}\bigg\rvert_{s=-\alpha}.$$ When $\alpha$ is a positive integer, the evaluation of the constants must be interpreted as a limiting case as the $\Gamma$ function diverges, so that certain constants vanish. In particular, provided that $\alpha$ is a positive integer (which we suggested based on the differential-approximant analysis) and $\beta$ is not a positive integer, one has
$$[x^n]f(x) \sim  \mu^n \cdot n^{-\alpha-1}(\log{n})^\beta \left(\frac{c_1}{\log{n}}+\frac{c_2}{\log^2{n}} +\frac{c_3}{\log^3{n}} + \frac{c_4}{\log^4{n}} + \cdots \right ).$$
The ratio of successive coefficients is, in the general case (by which we mean $\alpha$ not a positive integer),
\begin{equation}
r_n = \frac{[x^n]f(x)}{[x^{n-1}]f(x)} \sim \mu \left (1 - \frac{\alpha+1}{n} + \frac{\beta}{n\log{n}}+ \frac{c_1}{n\log^2{n}} +  \cdots  \right ),
\end{equation}  but in the case that $\alpha$ is a positive integer and $\beta$ is not a positive integer, one has 
\begin{equation}\label{eqn:lograts}
r_n = \frac{[x^n]f(x)}{[x^{n-1}]f(x)} \sim \mu \left (1 - \frac{\alpha+1}{n} + \frac{\beta-1}{n\log{n}}+ \frac{c_1}{n\log^2{n}} +  \cdots  \right ). 
\end{equation}

One can estimate  $\alpha$ from the sequence $$g_n = \left ( \frac{r_n}{\mu} -1 \right )\cdot n  \sim -\alpha-1 +\frac{\beta}{\log{n}}+\frac{c_1}{\log^2{n}} + \cdots.$$ This is the same exponent estimator plotted in  Figure~\ref{fig:gext}, but this refined analysis shows that $g_n$ should be plotted not against $1/n$ but rather against $1/\log{n},$ as we do in Figure~\ref{fig:gext-log}. Extrapolation to a value around $-3.0$ seems much more convincing from this plot.

Furthermore, the gradient of this plot should give a measure of $\beta$ or $\beta-1$ if, as it seems, $\alpha$ is a positive integer. Unfortunately, the curvature of the plot means it is not possible to accurately estimate the limiting gradient.

We can try and  estimate the exponent $\beta$ by setting $\alpha$ to exactly $2$, and so estimates of $\beta-1$ are given by
\begin{equation}\label{eqn:fit-alpha}
\beta_n-1 = \left ( \left ( \frac{r_n}{\mu} -1 \right )\cdot n+\alpha+1\right )\log{n}  \sim \beta-1+\frac{c_1}{\log{n}} + \cdots,
\end{equation}
 where the numerical estimates of $\mu$ and $\alpha$ are used. In this way, we obtain the results shown in Figure~\ref{fig:beta1}, which shows the difficulties of estimating the power of confluent logarithmic terms. The plot is clearly displaying a minimum, but it is difficult to extrapolate to $n \to \infty.$ If we assume integer values for the exponent, $\beta-1 = -3$ appears likely, but $\beta-1 = -2$ appears possible.
 
 Another way of estimating both $\mu_3$ and $\beta$ is to look more closely at the linear intercepts $\ell_n$ defined in \eqref{eq:li}. We can give more terms in the asymptotic expansion of $\ell_n$ by making use of \eqref{eqn:lograts}. Taking these terms into account, we can rewrite \eqref{eq:li} as
 \begin{equation}\label{eq:li2}
\ell_n = n\cdot r_n - (n-1)\cdot r_{n-1} \sim \mu_3\left ( 1 + \frac{(1-\beta)}{n\log^2{n}}+ \frac{c_1}{n\log^3{n}}+ \cdots \right ),
\end{equation}
assuming $\alpha$ is a positive integer; otherwise, $(1-\beta)$ should be replaced by $-\beta.$ This shows that $\ell_n$ should be plotted against $1/(n\log^2{n})$ and not $1/n$ as was done in Figure~\ref{fig:rl}. We show in Figure~\ref{fig:li2} the linear intercepts plotted in this way. It can be seen that the extrapolated limit is around 9.6996, as before. However, we can also use this data to estimate $\beta$ by forming from equation \eqref{eq:li} the estimators
\begin{equation}\label{eq:li2}
 \beta_n-1 \equiv \left (1-\frac{\ell_n}{\mu_3} \right )\cdot n \log^2{n} \sim \beta-1 + \frac{2c_1}{\log{n}} + \cdots.
\end{equation}
We show this data in Figure~\ref{fig:beta2}, plotted against $1/\log{n},$ as appropriate. It is clear that this is impossible to extrapolate, as it appears to be forming a minimum. One would need data minimally to $1/\log{n}=0.1$ and ideally to $1/\log{n}=0.05$ to confidently extrapolate this plot. This corresponds to 22,000 and 485 million terms respectively, which is of course out of the question. This highlights the difficulty of analysing for powers of logarithms.

 \begin{figure}[ht]
\setlength{\captionindent}{0pt}
\begin{minipage}{0.48\textwidth}
  \includegraphics[width=0.97\linewidth]{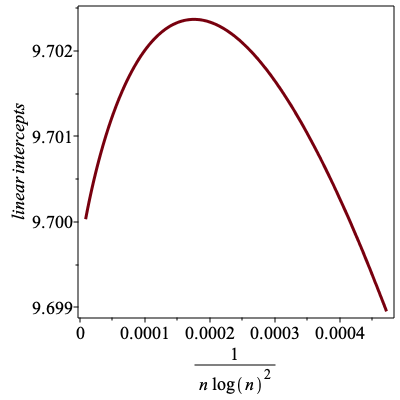} 
   \caption{Plot of linear intercepts  vs. $1/{n\log^2{n}},$ for $100 \le n \le 2000.$ }
   \label{fig:li2}
\end{minipage}\hfill
\begin{minipage}{0.48\textwidth}
   \includegraphics[width=0.97\linewidth]{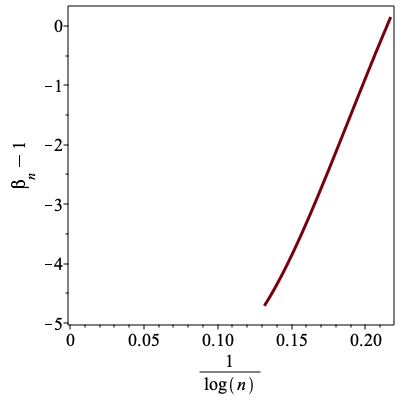} 
   \caption{Plot of $t_3=\beta-1$  vs. $1/{n},$ for $100 \le n \le  2000.$}
   \label{fig:beta2}
\end{minipage}
\end{figure}

Our tentative conclusion from this analysis is that the generating function for 3-stack-sortable permutations behaves as 
\begin{equation}\label{eqn:c}
W(t) \sim C\cdot (1-\mu_3 \cdot t)^{\alpha}\cdot \left ( \frac{1}{ t} \log \frac{1}{1-\mu_3 \cdot t} \right )^{\beta},
\end{equation}
 where $\alpha \approx 2,$ and $\beta \in [-3,-1]$ if $\alpha=2.$ If $\alpha \ne 2,$ it is too difficult to estimate $\beta,$ as it is highly sensitive to the estimate of $\alpha.$

 \begin{figure}[h!] 
   \centering
   \includegraphics[width=3in]{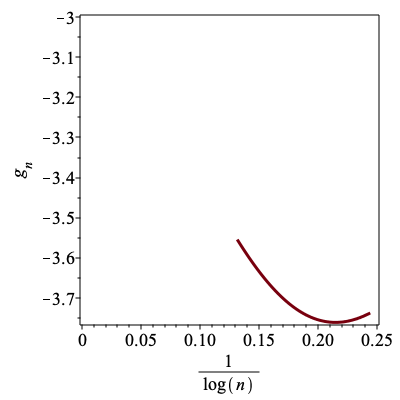} 
   \caption{ Estimates of $g$ vs. $1/\log{n}$  for $50 \le n \le 2000.$}
   \label{fig:gext-log}
\end{figure}

 \begin{figure}[h!] 
   \centering
   \includegraphics[width=3in]{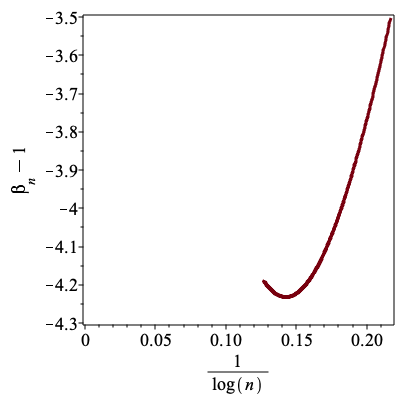} 
   \caption{ Estimates of $\beta-1$ vs. ${1/\log{n}}$  for $100 \le n \le 2000,$ assuming $\alpha=2.$}
   \label{fig:beta1}
\end{figure}
  To make these conclusions more convincing, we will re-estimate the asymptotics in two distinct ways: first from the coefficients directly, and second from the ratios as given in \eqref{eqn:lograts}.
  
  From equation \eqref{eqn:coeff}, we have\footnote{If $\alpha$ is a positive integer, $\Gamma(-\alpha)$ diverges. This is taken care of by replacing the pre-multiplier by some constant $C,$ as in \eqref{eqn:c}.}
  \begin{equation} \label{eqn:logcoeff}
  \log{f_n} = -\log{\Gamma(-\alpha)} +n \log{\mu_3} - (\alpha+1)\log{n}+(\beta-1)\log(\log{n}) +\frac{d_1}{\log{n}} +\frac{d_2}{\log^2{n}} + \cdots,
  \end{equation}
  assuming $\alpha$ is a positive integer, otherwise replace $\beta-1$ by $\beta.$
  
  We will use our estimate of $\mu_3$ and then fit to $$d_n\equiv \log{f_n}-n\log{\mu_3}=t_1+t_2\log{n}+t_3 \log(\log{n}) + \frac{t_4}{\log{n}} +\frac{t_5}{\log^2{n}}$$ by taking successive quintuples of coefficients $\{d_{k-2},\, d_{k-1},\, d_k,\, d_{k+1},\, d_{k+2}\},$ letting $k$ range over all values up to 2000, reflecting the length of the extended series at our disposal. The principal parameters of interest are $t_2=-\alpha-1$ and $t_3=\beta-1$ (assuming $\alpha$ is a positive integer), and plots of these quantities estimated in this way are given in Figures~\ref{fig:t2} and \ref{fig:t3} respectively.
 
  \begin{figure}[h!]
\setlength{\captionindent}{0pt}
\begin{minipage}{0.48\textwidth}
  \includegraphics[width=0.97\linewidth]{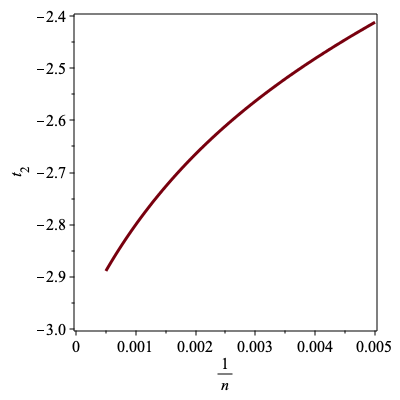} 
   \caption{Plot of $t_2=-\alpha-1$  vs. $1/{n},$ for $200 \le n \le 2000.$ }
   \label{fig:t2}
\end{minipage}\hfill
\begin{minipage}{0.48\textwidth}
   \includegraphics[width=0.97\linewidth]{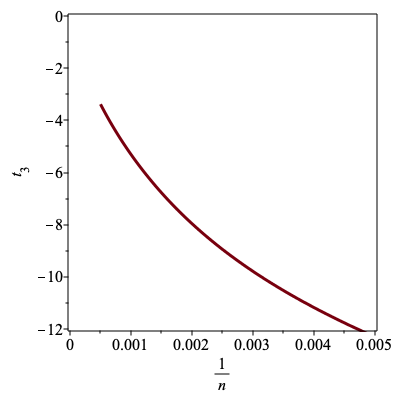} 
   \caption{Plot of $t_3=\beta-1$  vs. $1/{\log(n)},$ for $200 \le n \le  2000.$}
   \label{fig:t3}
\end{minipage}
\end{figure}

From Figure~\ref{fig:t2}, one sees $t_2=-\alpha-1$ convincingly approaching our previously-estimated value of $-3$ exactly, but the plot in Figure~\ref{fig:t3} cannot be reliably visually extrapolated. Indeed, there is the hint of a turning point, which could only be confirmed if we had still further terms. Investigating this, we tried fitting to fewer or more terms in the asymptotics. That is to say, here we have fitted to five unknown parameters. We tried fitting to just three parameters, then four, then six. We found that the estimates of $t_2$ were robust, all coming in around $-3,$ but the estimates of $t_3$ were not at all robust. We conclude that this is not a good method for estimating the exponent $\beta$ with the number of terms at our disposal, but that the earlier estimate $\alpha\approx 2$ is well-supported.
  
  A second method is to perform a similar fitting procedure with the sequence of ratios. Let $r_n = c_n/c_{n-1}$ be the ratio of successive coefficients in the generating function. Then
  $$e_n \equiv \left ( \frac{r_n}{\mu_3} -1 \right )\cdot n  = t_1 +\frac{t_2}{\log{n}}+\frac{t_3}{\log^2{n}}  +\frac{t_4}{\log^3{n}} .$$ As above, we assumed the estimated value of the growth constant $\mu_3$ and fitted successive quartets of coefficients $\{d_{k-2},\, d_{k-1},\, d_k,\, d_{k+1}\},$ letting $k$ range over all values up to 2000, reflecting the 2000-term (approximate) ratio series at our disposal.  The principal parameters of interest are $t_1=-\alpha-1$ and $t_2=\beta-1$, and plots of these quantities estimated in this way are given in Figures~\ref{fig:t1r} and \ref{fig:t2r} respectively. The plots are again consistent with our previous estimate, $\alpha \approx 2,$ while the plots of estimators of $\beta$ appear to be approaching a maximum, making extrapolation impossible.
  
  We repeated this analysis, fixing the value of $\alpha$ at 2 exactly, in the hope that it would give a more precise estimate of $\beta.$  More precisely, in equation \eqref{eqn:logcoeff}, we set $\alpha = 2$. Then fitting as described, we obtain the results shown in Figure~\ref{fig:beta3}. This is consistent with our previous tentative conclusion, $\beta-1=-3.$
  
  \begin{figure}[h!] 
   \centering
   \includegraphics[width=3in]{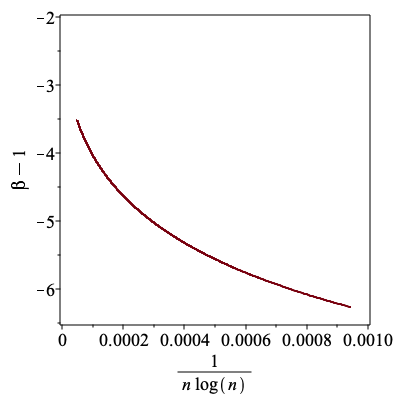} 
   \caption{ Estimates of $\beta-1$ vs. $1/{(n\log{n})}$  for $100 \le n \le 2000,$ assuming $\alpha=2.$}
   \label{fig:beta3}
\end{figure}

In yet another attempt to estimate $\beta,$ we first assumed that $\alpha=2.$ Then we normalised the coefficients by defining new coefficients $s_n \equiv w_n\cdot n^3/\mu^n.$ We expect $s_n \sim c_0\cdot \log^{\beta-1} n,$ and the ratios
$$R_n \equiv s_n/s_{n-1} \sim 1+\frac{\beta-1}{n\log{n}}.$$ Then $(R_n-1)\cdot n\log{n} \sim \beta-1.$ 

In Figure~\ref{fig:beta4}, we plot this estimator of $\beta-1.$
Note the turning point at around $n=700.$ This clearly demonstrates the importance of having sufficient terms if one is to have any chance of estimating the power of logarithmic terms. This plot illustrates that a limit of $-4$ appears very unlikely, while $-3$ appears plausible, but we certainly could not rule out $-2$. Moreover, this discussion ignores the possibility of non-integer values of $\beta.$
  
  \begin{figure}[h!] 
   \centering
   \includegraphics[width=3in]{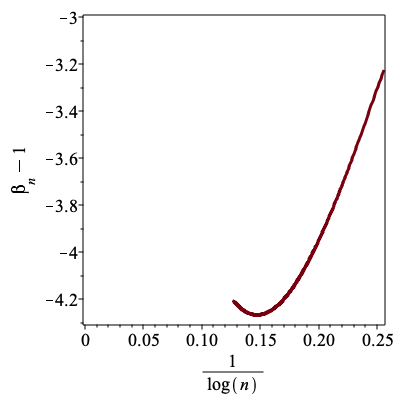} 
   \caption{ Estimates of $\beta-1$ vs. $1/{(\log{n})}$ from normalised ratios $R_n$  for $100 \le n \le 2000,$ assuming $\alpha=2.$}
   \label{fig:beta4}
\end{figure}

Another attempt to estimate the value of $\beta$, under the assumptions that $\beta$ is an integer and that $\alpha=2$, is the following: The asymptotic behaviour of the normalised coefficients $s_n$ is, from (\ref{eqn:coeff}), $$s_n \sim \frac{e_1}{n^{1-\beta}} + \frac{e_2}{n^{2-\beta}}+\frac{e_3}{n^{3-\beta}}+ \cdots .$$ We can assume the value of $\beta$ and, as discussed above, fit successive triples of coefficients $\{s_{k-1},\, s_{k},\,  s_{k+1}\}$ to estimate the amplitudes $e_i,\,\, i=1,2,3.$ If we choose the correct value of $\beta,$ these estimates should converge. If we choose the wrong value, the estimates should either diverge or vanish. Choosing $\beta=-1,$ the leading amplitude $e_1$ is estimated to be zero, implying the absence of a term O$(n^{-2}).$ Choosing $\beta=-3,$ the leading amplitude $e_1$ appears to be diverging, as do estimates of $e_2$ and $e_3.$ With $\beta=-2,$ estimates of $e_1$ appear to be converging to a value around 1.0. This is our strongest evidence that $\beta \approx -2.$
  
  We conclude this section by asserting that our best estimate of $\alpha$ is $\alpha=2.$ However, looking at a wide variety of fits as described above, it would be more prudent to give the estimate $\alpha = 2.0 \pm 0.25.$ If $\alpha=2,$ we hesitantly offer $\beta=-2$ as our best estimate, but with a wide uncertainty of $\pm 1$. If $\alpha \ne 2,$ we give no estimate of $\beta.$

\begin{figure}[ht]
\setlength{\captionindent}{0pt}
\begin{minipage}{0.48\textwidth}
  \includegraphics[width=0.97\linewidth]{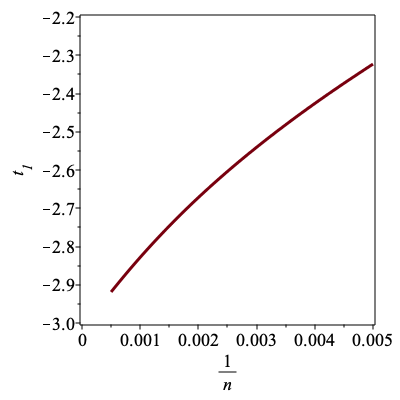} 
   \caption{Plot of $t_1=-\alpha-1$  vs. $1/{n},$ from ratios for $200 \le n \le 2000.$ }
   \label{fig:t1r}
\end{minipage}\hfill
\begin{minipage}{0.48\textwidth}
  \includegraphics[width=0.97\linewidth]{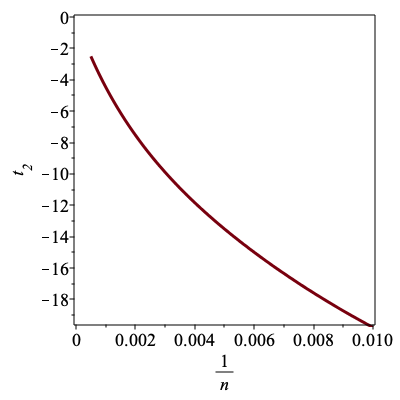} 
   \caption{Plot of $t_2=\beta-1$  vs. $1/{\log(n)},$ from ratios for $200 \le n \le 2000.$}
   \label{fig:t2r}
\end{minipage}
\end{figure}

\subsection{Bounds}

From the 174 exact coefficients he obtained, the first author gave the rigorous lower bound $\mu_3 \ge w_{174}^{1/174} = 8.659702\ldots$ \cite{D20}. From our 1000-term series, we have the improved result 
$$\mu_3 \ge  w_{1000}^{1/1000} = 9.44879\ldots .$$
The observation used to prove this inequality was that the sum $\lambda\oplus\pi$ of two 3-stack-sortable permutations $\lambda$ and $\pi$ is also 3-stack-sortable. Using this fact allows us to deduce a stronger bound. The following method was used in a similar context in \cite[Section 4]{EP19}, as well as a variety of different contexts where objects are decomposable (see, for example, \cite[p. 89--92]{MS93}). The idea is that we know that $W(t)$ can be written as
\[W(t)=\frac{\widetilde{W}(t)}{1-\widetilde{W}(t)},\]
where $\widetilde{W}(t)$ is the generating function for primitive (also called sum-indecomposable) 3-stack-sortable permutations. The 1000 coefficients of $W(t)$ yield 1000 coefficients of $\widetilde{W}(t)$. Moreover, by the equation relating $W(t)$ and $\widetilde{W}(t)$, we must have $\widetilde{W}(t)<1$ for $0<t<1/\mu_3$. We find that the first $1000$ terms of the series $\widetilde{W}(t)$ add to $1$ for $t=t_c=0.105424\ldots$, so
\[\mu_3 \ge 1/t_c =9.4854\ldots\]
Based on our approximate terms, we expect that this bound would improve to 
\[\mu_3 \ge  9.5828\ldots\]
given 2000 exact terms. Let us also remark that the best known rigorous upper bound for $\mu_3$, obtained in \cite{DefantPreimages}, is $12.53296$. 

In \cite{Bona, BonaSurvey}, B\'ona conjectured that $|\mathcal W_k(n)|\leq\binom{(k+1)n}{n}$ for all $n,k\geq 1$. In \cite{D20}, it was suggested that this conjecture is likely false, and it was shown that it contradicts a separate conjecture of B\'ona's stating that the sequence $(w_n)_{n\geq 1}$ is log-convex (meaning that the ratios $w_{n+1}/w_n$ are increasing). The first author was not able to fully disprove B\'ona's first conjecture because he did not have a sufficient lower bound for $\mu_3$. The rigorous lower bound $\mu_3\geq 9.4854\ldots$ that we have just obtained is significant because it allows us to disprove B\'ona's first conjecture.  Indeed, for $k=3$, the inequality $w_n\leq\binom{4n}{n}$ would imply that $\mu_3\leq 256/27\approx 9.481$, which we now know is not the case. On the other hand, our new data shows that the first $1000$ terms in the sequence $(w_n)_{n\geq 1}$ are log-convex, lending even more evidence toward B\'ona's second conjecture.

\section{Conclusion}
We extended the previous 174-term series that counts 3-stack-sortable permutations to 1000 terms and analysed the series. From this series, we conjecture that the generating function behaves as
$$W(t) \sim  C\cdot (1-\mu_3 t)^\alpha \cdot \log(1 - \mu_3 t)^\beta. $$
so that $$[t^n]W(t)=w_n\sim c_0\mu_3^n\cdot n^{-\alpha-1}(\log{n})^{\beta-1} ,$$
where $\mu_3 =  9.69963634535(30)$, $\alpha=2.0 \pm 0.25,$ and if $\alpha=2$ then $\beta = -2 \pm 1.$ 

The estimate of the growth constant (just) contradicts the conjecture of the first author that $$9.702 < \mu_3 \le 9.704.$$ 
We extended the series, using differential-approximants, to obtain an additional $1000$ approximate coefficients and used the approximate coefficients and ratio-analysis methods to confirm the results obtained from the differential-approximant analysis of the exact coefficients. We are confident of our estimate of the growth constant. We are slightly less confident of the estimate of the value of the exponent $\alpha$ and much less confident of our estimate of the value of $\beta.$ 

We have also attempted to identify $\mu_3$ experimentally as an algebraic number, and as a product of fractional powers of small primes, but without success.

\vspace{-0.01in}


\begin{thebibliography}{99}
\bibitem{B-BM} 
O. Bernardi and M. Bousquet-M\'elou, Counting coloured planar maps: differential equations. \emph{Comm. Math. Phys.}, {\bf 354} (2017) 31--84.

\bibitem{B-BM-R} 
O. Bernardi, M. Bousquet-M\'elou, and K. Raschel, Counting quadrant walks via Tutte's invariant method. arXiv:1708.08215 (2017).
 
\bibitem{Bona}
M. B\'ona, Combinatorics of permutations, second edition. CRC Press, 2012. 

\bibitem{BonaSurvey}
M. B\'ona, A survey of stack-sorting disciplines. \emph{Electron. J. Combin.}, {\bf 9} (2003).

\bibitem{BM-J} 
M. Bousquet-M\'elou and A. Jehanne, Polynomial equations with one catalytic variable, algebraic series and map enumeration. \emph{J. Combin. Theory Ser. B}, {\bf 96} (2006), 623--672. 

\bibitem{BM-M} M. Bousquet-M\'elou and M. Mishna, 
Walks with small steps in the quarter plane. \emph{Contemp. Math.}, {\bf 520} (2010), 1--40.

\bibitem{D20}
C. Defant, Counting $3$-stack-sortable permutations. \emph{J. Combin. Theory Ser. A}, {\bf 172} (2020).

\bibitem{DefantPreimages}
C. Defant, Preimages under the stack-sorting algorithm, \emph{Graphs Combin.}, {\bf 33} (2017), 103--122. 

\bibitem{DefantTroupes}
C. Defant, Troupes, cumulants, and stack-sorting. arXiv:2004.11367 (2020). 

\bibitem{DHRS} 
T. Dreyfus, C. Hardouin, J. Roques, and M. F. Singer, On the nature of the generating series of walks in the quarter plane. \emph{Invent. Math.}, {\bf 213} (2018), 139--203.

\bibitem{EP19}
A. Elvey Price,
Permutations sortable by deques and two stacks in parallel share the same growth rate. arXiv:1912.00056 (2019).

\bibitem{EPwebsite}
A. Elvey Price, \hyperlink{https://www.idpoisson.fr/elveyprice/en/files-for-3-stack-sortable-permutations/}{https://www.idpoisson.fr/elveyprice/en/files-for-3-stack-sortable-permutations/}
(2020).

\bibitem{EPG17}
A. Elvey Price and A. J. Guttmann, Permutations sortable by deques and by two stacks in parallel.
\emph{European J. Comb.}, {\bf 59} (2017), 71--95.

\bibitem{EG18}
A. Elvey Price and A. J. Guttmann, Counting planar Eulerian orientations,
\emph{European J. Combin.}, {\bf 71} (2018), 73--98. 

\bibitem{FS09}
P. Flajolet  and R. Sedgewick, Analytic combinatorics. Cambridge University Press (2009).

\bibitem{GG13}
J. Von Zur Gathen and G. J\"urgen, Modern computer algebra. Cambridge University Press (2013).

\bibitem{G89} A. J. Guttmann, Phase Transitions and Critical Phenomena, Vol. 13, Eds. C. Domb and J. L. Lebowitz, Academic, London and New York, (1989).

\bibitem{G16}
A. J. Guttmann, Series extension: Predicting approximate series coefficients from a finite number of exact coefficients.
\emph{J. Phys. A: Math. and Theor.}, {\bf 49} (2016).

\bibitem{HH19}
D. Harvey and J. van der Hoeven, Polynomial multiplication over finite fields in time $O(n\log n)$. HAL: \hyperlink{https://hal.archives-ouvertes.fr/hal-02070816/document}{02070816}.

\bibitem{K68}
D. E. Knuth, The art of computer programming, Vol. 1, fundamental algorithms, Addison-Wesley, Reading, Mass. (1968).

\bibitem{MS93} N. Madras and G. Slade, The self-avoiding walk. Birkhauser (1993).

\bibitem{OEIS}
The On-Line Encyclopedia of Integer Sequences, published electronically at http://oeis.org, 2020.

\bibitem{Ulfarsson}
H. \'Ulfarsson, Describing West-$3$-stack-sortable permutations with permutation patterns. S\'em. Lothar. Combin., {\bf 67} (2012).

\bibitem{W90}
J. West, Permutations with forbidden sequences and stack-sortable permutations. Thesis (Ph.D) MIT (1990).

\bibitem{Z92}
 D. Zeilberger, A proof of Julian West's conjecture that the number of two-stack-sortable permutations of length $n$ is $2(3n)!/((n+1)!(2n+1)!)$. \emph{Discrete Math.}, {\bf 102} (1992), 85--93.

\end{thebibliography}
\end{document}